\crefname{hypothesis}{Hypothesis}{Hypotheses}
\crefname{fact}{Fact}{Facts}
\title{Functional tensor train neural network for solving high-dimensional PDEs\thanks{Submitted to the editors DATE.
\funding{M. Ng’s research is supported in part by the National Key Research and Development Program of China under Grant 2024YFE0202900, GDSTC: Guangdong and Hong Kong Universities “1+1+1” Joint Research Collaboration Scheme UICR0800008-24, Hong Kong RGC grant project (No. 12300125), and Joint NSFC and RGC N-HKU769/21. Z. Zhang’s research is supported in part by the National Natural Science Foundation of China (No. 92470103), Hong Kong RGC grant project (No. 17300325), and Seed Funding for Strategic Interdisciplinary Research Scheme 2021/22 (HKU). Kejun Tang is supported by the Natural Science Foundation of China  (No. 12501598).}}}
\author{Yani Feng\thanks{Department of Mathematics, The University of Hong Kong
  (\email{fengyn@hku.hk}, \email{zhangzw@hku.hk}).}
\and Michael K. Ng\thanks{Department of Mathematics, Hong Kong Bapist University
  (\email{michael-ng@hkbu.edu.hk}).}
\and Kejun Tang\thanks{Department of Mathematics, School of Sciences, Great Bay University
  (\email{tangkj@gbu.edu.cn}).}
\and Zhiwen Zhang\footnotemark[2]}
\begin{document}

\maketitle

% REQUIRED
\begin{abstract}
Discrete tensor train decomposition is widely employed to mitigate the curse of dimensionality in solving high-dimensional PDEs through traditional methods. However, the direct application of the tensor train method  typically requires uniform grids of regular domains, which limits its application on non-uniform grids or irregular domains.
To address the limitation,
we develop a functional tensor train neural network (FTTNN) for solving high-dimensional
PDEs, which can represent PDE solutions on non-uniform grids or irregular domains. 
An essential ingredient of our approach is to represent the PDE solutions by the functional tensor train format whose TT-core functions are approximated by neural networks.
To give the functional tensor train representation, we propose and study functional tensor train rank and employ it into a physics-informed loss function for training. 
Because of tensor train representation, the resulting high-dimensional integral in the loss function can be computed via 
one-dimensional integrals by Gauss quadrature rules.
Numerical examples including high-dimensional PDEs on regular or irregular domains
are presented to demonstrate that the performance 
of the proposed FTTNN is better than that of 
Physics Informed Neural Networks (PINN).
\end{abstract}

% REQUIRED
\begin{keywords}
functional tensor train rank, functional tensor train neural network, discrete tensor train decomposition, high-dimensional PDEs 
\end{keywords}

% REQUIRED
\begin{MSCcodes}
35J57, 15A69, 65N25, 68T07
\end{MSCcodes}

\section{Introduction}
Partial Differential Equations (PDEs) are fundamental mathematical tools that describe
a wide range of natural and physical phenomena, such as fluid flow, heat transfer, 
electromagnetism, and quantum mechanics.
We are interested in high-dimensional PDEs, such as Schrödinger eigenvalue problems, which frequently arise in advanced scientific and engineering applications. 
Traditional numerical methods including finite difference methods, finite element methods, and spectral methods, have been well-established and widely used for solving PDEs in low-dimensional settings.
However, when applied to high-dimensional problems, these methods face significant challenges due to the curse of dimensionality (CoD).

In recent years, numerous numerical methods have been proposed to mitigate the CoD in high-dimensional PDEs. 
These methods can be broadly classified into two main categories: tensor-based methods \cite{khoromskij2018tensor,xiao2025provable} and neural network-based methods. The core idea of tensor-based methods is to tensorize high-dimensional discretized PDE operators due to their low-rank structures. Specifically, they apply discrete tensor decomposition methods to represent the discretized PDE operators. 
Common discrete tensor decomposition methods \cite{kolda2009tensor,oseledets2011tensor}, such as CANDECOMP/PARAFAC (CP) decomposition, Tucker decomposition, and tensor train (TT) decomposition, have been employed not only to reduce computational complexity but also to analyze the expressive power of deep learning models \cite{cohen2016expressive,cohen2016convolutional,khrulkov2018expressive}, including convolutional neural networks and recurrent neural networks.
Despite their advantages, their applicability to non-uniform grids  or irregular domains is limited.
On the other hand, neural network-based methods aim to search for an optimal solution within function space spanned by neural networks.
The most representative methods include Physics Informed Neural Networks (PINN) \cite{raissi2019physics}, 
Deep Galerkin Method (DGM) \cite{sirignano2018dgm}, and Deep Ritz Method (DRM) \cite{yu2018deep}. 
However, these methods rely on sampling methods to estimate high-dimensional integrals in loss functions, which can introduce statistical error and overshadow the capability of neural networks \cite{tang2023pinns}.

To address these issues, we develop a functional tensor train neural network (FTTNN) for solving high-dimensional PDEs.
This method introduces a regularization---functional tensor train decomposition (FTTD) to represent high-dimensional PDE solutions. Each TT-core function of FTTD is parameterized by a neural network. FTTD is a natural extension of discrete tensor train decomposition from discrete grids to the continuous domain. 
Meanwhile, we develop fundamental concepts including functional tensor train rank and functional tensor train representation.
The neural networks of FTTNN are trained by minimizing a physics-informed loss function, which involves a high-dimensional integral. For the high-dimensional integral, its tensor train structure allows us to estimate it by computing only a few one-dimensional integrals, instead of directly computing high-dimensional
integration. 
We also provide the approximation property of FTTNN.
Numerical experiments show that
compared with discrete tensor train decomposition, FTTNN is more flexible in approximating functions and is applicable to PDEs on irregular domains.
Additionally, the results illustrate that 
compared with PINN, FTTNN achieves better performance in terms of accuracy. 

There are some works on extending low-rank modeling for continuous function representation beyond meshgrids.
Oseledets \cite{oseledets2013constructive} gives explicit representations 
of several multivariate functions in the tensor-train format, such as the polynomial and sine functions. Gorodetsky et. al \cite{gorodetsky2019continuous} develop new algorithms and data structures for representing and computing with the functional tensor-train decomposition. Based on such a new continuous functional tensor train decomposition, some high-dimensional uncertainty quantification problems \cite{bigoni2016spectral} and multidimensional quantum dynamics \cite{soley2021functional} are studied.
 Luo et. al \cite{luo2023low} propose a low-rank tensor function representation parameterized by multilayer perceptrons for multi-dimensional data continuous representation. Novikov et. al \cite{novikov2021tensor} propose a continuous tensor train-based model for density estimation.
Wang et. al \cite{wang2024solving} propose tensor neural networks for solving high-dimensional boundary value problems and eigenvalue problems.

The differences between our work and the methods mentioned above are as follows. In the literature \cite{oseledets2013constructive,bigoni2016spectral,gorodetsky2019continuous,novikov2021tensor,soley2021functional}, the aim is to obtain approximations of solutions in the tensor train format, where its TT-core functions are parameterized by finite sum of basis functions.
Comparatively, our method intends to obtain the functional tensor train
approximation of the solutions, where its TT core functions are parameterized by neural networks, more suitable to capture complex PDE solutions.
In addition, we give the concept of functional tensor train rank, which establishes connections between discrete and continuous/functional tensor train representations for the solutions.
Compared with the Tucker decomposition-based work \cite{luo2023low}, FTTNN does not suffer the CoD. 
FTTNN also has stronger expressive power than the method proposed in \cite{wang2024solving},  as tensor train  decomposition has greater expressive efficiency than CP decomposition.

The rest of paper is organized as follows. In \Cref{section2}, we give the definition of functional tensor train rank and then introduce FTTNN.
How to compute a physics-informed loss function in the tensor train format and impose boundary conditions for training FTTNN is followed.
In \Cref{section3}, we give the approximation property of FTTNN.
In \Cref{section4}, numerical experiments are conducted to verify the accuracy of FTTNN.
Concluding remarks follow in \Cref{section5}.

\section{Methodology}\label{section2}
% \subsection{Problem setup}
To start with, the details of the PDE model discussed in this paper are presented as follows.
Let $\Omega\subset \mathbb{R}^d$ denote a physical domain that is bounded, connected and with a polygonal boundary $\partial \Omega$, and $\mathbf{x} \in \Omega$ is  a physical variable.
The physics of the problem considered is governed by a PDE over the physical domain $\Omega$: find $u(\mathbf{x})$ such that
\begin{equation}\label{physical_problem}
	\begin{aligned}
		&\mathcal{L}u(\mathbf{x})=f(\mathbf{x}),\quad \forall \mathbf{x} \in \Omega,\\
		&\mathcal{B}u(\mathbf{x})=g(\mathbf{x}),\quad \forall \mathbf{x} \in \partial\Omega,
	\end{aligned}
\end{equation}
where $\mathcal{L}$ is a partial differential operator and $\mathcal{B}$ is a boundary operator, $f(\mathbf{x})$ is the source function, and  $g(\mathbf{x})$ specifies boundary conditions. 

Our goal is to solve \Cref{physical_problem} using neural network-based methods, particularly when the dimension $d$ is large, as traditional numerical methods become infeasible due to the curse of dimensionality.
Specifically, we represent $u(\mathbf{x})$ using a functional tensor train neural network $u(\mathbf{x};\theta)$ 
with trainable parameters $\theta$,
and $\theta$ is obtained by minimizing the following loss function,
\begin{equation}\label{loss_f}
    \mathrm{J}(\theta):=\int_{\Omega} (\mathcal{L}u(\mathbf{x};\theta)-f(\mathbf{x}))^2d\mathbf{x}+\beta \int_{\partial \Omega} (\mathcal{B}u(\mathbf{x};\theta)-g(\mathbf{x}))^2d\mathbf{x},
\end{equation}
where the first term (residual loss) and the second term (boundary loss) measure how well $u(\mathbf{x};\theta)$ satisfies the partial differential equations and the boundary conditions in \Cref{physical_problem}, respectively.
$\beta>0$ is a penalty parameter for boundary conditions. $\mathrm{J}(\theta)$ is a physics-informed loss function.
In the next section, we will explain how to represent $u(\mathbf{x})$ with the functional tensor train neural network.

\subsection{Functional tensor train Representation}
In the subsection, we first recall the discrete tensor train decomposition. Subsequently, we define the functional tensor train rank and then present the functional tensor train representation for $u$ in \Cref{physical_problem}. 
\begin{theorem}[Discrete tensor train decomposition]\label{TTD}
For a $d$-th order tensor $\mathrm{A}\in $$\mathbb{R}^{n_1\times \cdots \times n_d}$,
    there exists a tensor train decomposition \cite{oseledets2011tensor}
    \begin{align*}
        \mathrm{A}(i_1,\dots,i_d)=\sum_{\alpha_0=1}^{r_0}\sum_{\alpha_1=1}^{r_1}\cdots\sum_{\alpha_d=1}^{r_d} \mathrm{G_1}(\alpha_0,i_1,\alpha_1)\mathrm{G_2}(\alpha_1,i_2,\alpha_2)\cdots \mathrm{G_d}(\alpha_{d-1},i_d,\alpha_d),
    \end{align*}
     with TT-ranks $(r_0,r_1,\dots,r_d)$, where $r_0=r_d=1$, and for $k=1,\dots,d-1$,
    \begin{align*}
         r_k= \text{rank}(A_k), \ A_k(i_1,\dots,i_k;i_{k+1},\dots,i_d)=\mathrm{A}(i_1,\dots,i_d).
    \end{align*}
    Here, the first $k$ indices of $\mathrm{A}$ 
    enumerate the rows of the matrices $A_k$ and the last $d-k$ indices enumerate 
    the columns of $A_k$. The matrices $A_k$
    are called the unfoldings of the tensor $\mathrm{A}$.
\end{theorem}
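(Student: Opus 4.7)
The plan is to give a constructive proof by performing successive rank-revealing factorizations (thin SVDs will do) of the unfoldings, peeling off one TT-core at each step. I would first reshape $\mathrm{A}$ as the first unfolding $A_1\in\mathbb{R}^{n_1\times(n_2\cdots n_d)}$, which by hypothesis has rank $r_1$. Compute a thin factorization $A_1=U_1 W_1$ with $U_1\in\mathbb{R}^{n_1\times r_1}$ having orthonormal columns and $W_1\in\mathbb{R}^{r_1\times(n_2\cdots n_d)}$. Set the first core by $\mathrm{G_1}(1,i_1,\alpha_1):=U_1(i_1,\alpha_1)$.

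Next I would reshape $W_1$ into a tensor $\mathrm{B}\in\mathbb{R}^{r_1\times n_2\times\cdots\times n_d}$ indexed by $(\alpha_1,i_2,\ldots,i_d)$ and consider its first unfolding $B_1\in\mathbb{R}^{(r_1 n_2)\times(n_3\cdots n_d)}$. The key claim to verify is $\mathrm{rank}(B_1)=r_2$, i.e.\ the prescribed rank of the second unfolding $A_2$ of the original tensor. Granted this, I can factor $B_1=U_2 W_2$ with orthonormal $U_2\in\mathbb{R}^{(r_1 n_2)\times r_2}$ and define $\mathrm{G_2}(\alpha_1,i_2,\alpha_2):=U_2((\alpha_1,i_2),\alpha_2)$. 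I then iterate: at step $k$ the residual is a tensor of order $d-k+1$ with leading mode of size $r_{k-1}$; I form its first unfolding, prove its rank equals $r_k$, factor it, and read off $\mathrm{G_k}$ from the left factor. At the last step the residual $W_{d-1}\in\mathbb{R}^{r_{d-1}\times n_d}$ is assigned directly as $\mathrm{G_d}(\alpha_{d-1},i_d,1):=W_{d-1}(\alpha_{d-1},i_d)$. Contracting the cores along $\alpha_1,\ldots,\alpha_{d-1}$ telescopes back to $\mathrm{A}(i_1,\ldots,i_d)$ by construction, giving the desired TT representation with $r_0=r_d=1$.

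The main obstacle is the rank-preservation claim $\mathrm{rank}(B_1)=r_2$ at each iteration, since without it one would only obtain a decomposition with upper bounds on the TT-ranks, not the exact values $r_k=\mathrm{rank}(A_k)$ stated in the theorem. I would prove it by relating the appropriate unfolding of the original tensor to the current residual unfolding through an isometry built from the previously extracted orthonormal factors. Concretely, since $A_1=U_1 W_1$ with $U_1$ having orthonormal columns, one has $A_2=(U_1\otimes I_{n_2})\,\widetilde{B}_1$ up to an index permutation, where $\widetilde{B}_1$ is obtained from $B_1$ by a reshape that does not change rank; because $U_1\otimes I_{n_2}$ has orthonormal columns, left-multiplication by it preserves column rank, so $\mathrm{rank}(A_2)=\mathrm{rank}(\widetilde{B}_1)=\mathrm{rank}(B_1)=r_2$. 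The same argument, applied with the Kronecker product of the previously constructed $U_1,\ldots,U_{k-1}$, handles the inductive step for $r_k$. Once this bookkeeping is set up, the rest of the proof is routine linear algebra and the TT decomposition follows immediately.
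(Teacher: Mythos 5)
The paper does not prove this theorem; it states it as a cited result from Oseledets (2011), which is exactly the TT--SVD construction you reproduce. Your argument is correct and is essentially the original proof: peel off cores by successive rank-revealing factorizations of the leading unfolding, and justify that the $k$-th TT-rank equals $\mathrm{rank}(A_k)$ via the identity $A_k = (V_{k-1}\otimes I_{n_k})\,B_1^{(k-1)}$ (up to index permutation), where $V_{k-1}$ is the accumulated product of the orthonormal left factors interleaved with identity blocks and hence itself has orthonormal columns, so left-multiplication by $V_{k-1}\otimes I_{n_k}$ preserves rank. The one point worth stating explicitly in a polished write-up is the inductive invariant that the accumulated left factor $V_{k-1}$ has orthonormal columns, which follows because a Kronecker product of a column-orthonormal matrix with an identity is column-orthonormal and the product of two column-orthonormal matrices (of compatible sizes) is column-orthonormal; you gesture at this but it is the crux of the rank-preservation claim, so it deserves a line of its own.
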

\begin{definition}[Tensor function]
    Let $u(\cdot) : \Omega \subset \mathbb{R}^d \to \mathbb{R}$ be a bounded real-valued function, where $\Omega =\Omega_1\times  \cdots \times \Omega_d$,
    and each $\Omega_i \subset \mathbb{R}$ represents a definition domain in the $i$-th dimension, for $i=1,\dots,d$.
    When $\Omega$ is a discrete set, the function $u(\cdot)$ reduces to a discrete case, i.e., $d$-th order tensor.
    Therefore, $u(\cdot)$ can be regarded as a tensor function.
\end{definition}
\begin{definition}[Sampled tensor set]
    For a tensor function $u(\cdot): \Omega \subset \mathbb{R}^d \to \mathbb{R}$, the sampled tensor set $S[u]$ is defined as:
\begin{align*}
    S[u]:=&\{\mathrm{A}| \mathrm{A}\in \mathbb{R}^{n_1\times \cdots \times n_d}, \mathrm{A}{(i_1,\dots,i_d)}=u(\mathbf{x_1}{(i_1)},\dots,\mathbf{x_d}{(i_d)}),\\
    &
    \mathbf{x_1}\in \Omega_1^{n_1},\dots,\mathbf{x_d}\in \Omega_d^{n_d}, n_1,\dots,n_d \in \mathbb{N}_{+}
    \},
\end{align*}
where $\mathbf{x_i}$ denote the coordinate vector generated by sampling $n_i$ coordinates from  $\Omega_i$, and the collection of all such vectors $\mathbf{x_i}$ forms the set $\Omega_i^{n_i}$, for $i=1,\dots,d$.
\end{definition}
\begin{definition}[Functional tensor train rank]\label{FTT-rank}
    Given a tensor function $u :\Omega \subset \mathbb{R}^d  \to \mathbb{R}$, 
    we define the functional tensor train rank, denoted 
    by FTT-rank$[u]$, as the supremum of tensor train rank in the 
    sampled tensor set $S[u]$:
    \begin{align*}
        \text{FTT-rank}[u]:=(r_0,r_1,\dots,r_{d-1},r_d), r_k=\sup_{\mathrm{A}\in S[u]}\text{rank}(A_k), k=1,\dots,{d-1},
    \end{align*}
    and $r_0=r_d=1$.
\end{definition}
\begin{proposition}
    Let $\mathrm{A}\in \mathbb{R}^{n_1\times\cdots \times n_d}$
    be an arbitrary tensor. For $i=1,\dots,d$, define the discrete set $\Omega_i=\{1,2,\dots,n_i\}$, and set $\Omega=\Omega_1\times \cdots \times \Omega_d $.
    Define a tensor function $u: \Omega \to \mathbb{R}$ by $u(x_1,\dots,x_d)=\mathrm{A}(x_1,\dots,x_d)$, for any $(x_1,\dots,x_d)\in \Omega$.
    Then $\text{FTT-rank}[u]$ equals the TT-ranks of $\mathrm{A}$.
\end{proposition}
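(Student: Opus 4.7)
The plan is to establish $\text{FTT-rank}[u]=(r_0,r_1,\ldots,r_d)$ with $r_k=\text{rank}(A_k)$ for $k=1,\ldots,d-1$ by a matched pair of inequalities; the endpoint values $r_0=r_d=1$ agree by convention on both sides. For the lower bound, I would observe that $\mathrm{A}$ itself belongs to $S[u]$: taking the canonical sample $\mathbf{x_i}=(1,2,\ldots,n_i)\in\Omega_i^{n_i}$ for each mode $i$, the resulting tensor in $S[u]$ has entries $u(i_1,\ldots,i_d)=\mathrm{A}(i_1,\ldots,i_d)$, which recovers $\mathrm{A}$ exactly. Hence the supremum defining FTT-rank in \Cref{FTT-rank} is at least $\text{rank}(A_k)$, giving $r_k\geq\text{rank}(A_k)$.

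For the upper bound, I would take an arbitrary $\mathrm{B}\in S[u]$ arising from samples $\mathbf{x_1},\ldots,\mathbf{x_d}$ (not necessarily distinct or exhaustive) and show that its $k$-th unfolding $B_k$ is a submatrix (with possibly repeated rows and columns) of $A_k$. Unpacking the definitions gives
\[
B_k(i_1,\ldots,i_k;i_{k+1},\ldots,i_d) = A_k\bigl(\mathbf{x_1}(i_1),\ldots,\mathbf{x_k}(i_k);\mathbf{x_{k+1}}(i_{k+1}),\ldots,\mathbf{x_d}(i_d)\bigr),
\]
so the rows of $B_k$ form a multiset of rows of $A_k$, each restricted to the same subselection of columns. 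Since row and column selection cannot increase rank, and repetition introduces no new linearly independent rows, $\text{rank}(B_k)\leq\text{rank}(A_k)$. Taking the supremum over $\mathrm{B}\in S[u]$ yields $r_k\leq\text{rank}(A_k)$, and combining with the lower bound gives $r_k=\text{rank}(A_k)$ for every intermediate $k$.

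The argument is largely bookkeeping rather than a substantive obstacle; the only point requiring care is the multi-index correspondence between an unfolding of the sampled tensor $\mathrm{B}$ and the corresponding row/column subselection of $A_k$, which I would write out explicitly to justify the rank-monotonicity step. No deep structural property of the tensor train format is invoked beyond the rank characterization of TT-ranks via unfoldings recalled in \Cref{TTD}.
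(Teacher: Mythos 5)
Your proof is correct. The paper states this proposition without any accompanying proof, treating it as an immediate consequence of \Cref{FTT-rank} and \Cref{TTD}; your two-sided argument --- exhibiting $\mathrm{A}\in S[u]$ via the identity sampling $\mathbf{x_i}=(1,\dots,n_i)$ for the lower bound, and observing that every unfolding $B_k$ of a sampled tensor is a row/column subselection (with possible repeats) of $A_k$ so that $\mathrm{rank}(B_k)\le\mathrm{rank}(A_k)$ for the upper bound --- supplies exactly the bookkeeping the paper leaves implicit.
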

This proposition establishes the connection between the functional tensor train rank and the classical
tensor train rank in the discrete case.
\begin{theorem}[Functional tensor train decomposition]
    Let $u: \Omega \subset \mathbb{R}^d \to \mathbb{R}$ be a tensor function.
    If $\text{FTT-rank}[u]:=(r_0,r_1,\dots,r_{d-1},r_d)$ in Definition \ref{FTT-rank}, then there exist TT-core functions $u_i(\cdot): \Omega_i \to \mathbb{R}^{r_{i-1}\times r_{i}},i=1,\dots,d$,
    such that for any $\mathbf{x} \in \Omega$,
\begin{align}\label{hx}
u(\mathbf{x})&=u(x_1,\dots,x_d) \nonumber\\
&=\sum_{\alpha_0=1}^{r_0}\sum_{\alpha_1=1}^{r_1}\cdots\sum_{\alpha_d=1}^{r_d} u_1^{(\alpha_0,\alpha_1)}(x_1)u_2^{(\alpha_1,\alpha_2)}(x_2)\cdots u_d^{(\alpha_{d-1},\alpha_d)}(x_d).
\end{align}
\end{theorem}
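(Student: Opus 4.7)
My plan is to mimic the classical constructive proof of the discrete TT decomposition in \Cref{TTD} but to replace every matrix SVD by a functional rank-revealing factorization of the unfoldings of $u$. The argument will proceed by induction on the mode index $k = 1, \dots, d-1$: at each step I peel off a TT-core function $u_k$ by performing a rank-$r_k$ separation of variables of the current remainder function.

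The technical workhorse is the following functional analogue of matrix factorization: if $U : X \times Y \to \mathbb{R}$ is a bivariate function whose finite sampling matrices have ranks with supremum equal to a finite integer $r$, then $U$ admits a separated form $U(x,y) = \sum_{\alpha=1}^{r} \phi^{(\alpha)}(x)\,\psi^{(\alpha)}(y)$. To establish this, I will use the fact that the supremum is attained to pick interpolation points $x^{(1)}, \dots, x^{(r)} \in X$ and $y^{(1)}, \dots, y^{(r)} \in Y$ with $C := [U(x^{(i)}, y^{(j)})]_{i,j}$ invertible. For any $(x,y)$, the $(r+1)\times(r+1)$ augmented sample matrix still has rank at most $r$, so its Schur complement vanishes, giving
\[
U(x,y) \;=\; \sum_{i,j=1}^{r} U(x, y^{(j)})\,(C^{-1})_{ji}\,U(x^{(i)}, y),
\]
which is the desired factorization; invertibility of $C$ forces the factor families $\phi^{(i)}(x) := \sum_j (C^{-1})_{ji} U(x, y^{(j)})$ and $\psi^{(i)}(y) := U(x^{(i)}, y)$ to be linearly independent.

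I will then build the cores inductively. Applying the key lemma to $U_1(x_1; x_2, \dots, x_d) := u(x_1, \dots, x_d)$ yields $u(x_1, \dots, x_d) = \sum_{\alpha_1=1}^{r_1} u_1^{(1,\alpha_1)}(x_1)\,R_1^{(\alpha_1)}(x_2, \dots, x_d)$. Suppose after $k-1$ steps we have
\[
u(x_1, \dots, x_d) = \sum_{\alpha_1, \dots, \alpha_{k-1}} u_1^{(1, \alpha_1)}(x_1) \cdots u_{k-1}^{(\alpha_{k-2}, \alpha_{k-1})}(x_{k-1})\,R_{k-1}^{(\alpha_{k-1})}(x_k, \dots, x_d),
\]
with all one-dimensional factor families linearly independent. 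I will form the enlarged bi-function $V_k((\alpha_{k-1}, x_k);(x_{k+1}, \dots, x_d)) := R_{k-1}^{(\alpha_{k-1})}(x_k, \dots, x_d)$ and verify that its sampling matrices are related to those of the $k$-th unfolding $U_k$ of $u$ by a full-column-rank block transformation; hence the supremum sampling rank of $V_k$ equals $r_k$. A second application of the key lemma to $V_k$ then produces $R_{k-1}^{(\alpha_{k-1})}(x_k,\dots,x_d) = \sum_{\alpha_k=1}^{r_k} u_k^{(\alpha_{k-1}, \alpha_k)}(x_k)\,R_k^{(\alpha_k)}(x_{k+1}, \dots, x_d)$, and after $d-1$ iterations the final remainder $R_{d-1}^{(\alpha_{d-1})}(x_d)$ becomes $u_d^{(\alpha_{d-1}, 1)}(x_d)$, establishing \eqref{hx}.

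The main obstacle will be the bookkeeping in the inductive step: verifying that the enlarged bi-function $V_k$ really has supremum sampling rank exactly $r_k$, rather than something smaller. This is the functional analogue of the fact that the right factor $\Sigma_1 V_1^{\top}$ produced by an SVD-based TT construction preserves the next unfolding rank of the original tensor, and it hinges on carefully maintaining the linear independence of the one-dimensional factor families extracted at every prior stage so that the block transformation relating samples of $V_k$ and $U_k$ does not drop rank.
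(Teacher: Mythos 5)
The paper does not actually prove this theorem; after stating it, the text simply remarks that it is ``a natural extension of tensor train decomposition \dots\ from discrete grids to the continuous domain'' and cites Oseledets (2013). Your proposal therefore supplies an argument where the paper gives only a pointer, and the argument you sketch is sound: it is the functional analogue of the skeleton-decomposition (cross-approximation) route to the discrete TT-SVD rather than the orthogonal-SVD route, but it arrives at the same place.

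Two points deserve to be spelled out because they are where a careless version of this argument would break.

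First, the key lemma quantifies over \emph{arbitrary} sampling matrices of the bivariate function $U$, whereas FTT-rank in Definition~\ref{FTT-rank} is defined via \emph{product} grids $\Omega_1^{n_1}\times\cdots\times\Omega_d^{n_d}$. These suprema coincide: any finite collection of points in $\Omega_{k+1}\times\cdots\times\Omega_d$ (resp.\ in $\Omega_1\times\cdots\times\Omega_{k-1}$) is contained in a product grid built from the union of its coordinate projections, and the arbitrary sample matrix is then a submatrix of the product sample matrix, so its rank cannot exceed $r_k$. Without this observation the Schur-complement step (which needs the $(r+1)\times(r+1)$ augmented matrix to have rank at most $r$) would not literally follow from Definition~\ref{FTT-rank}.

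Second, the ``main obstacle'' you flag---that the bi-function $V_k\big((\alpha_{k-1},x_k);(x_{k+1},\dots,x_d)\big)$ really has supremum sampling rank exactly $r_k$---does resolve cleanly, and it is worth recording why. Writing
$W^{(\alpha_{k-1})}(x_1,\dots,x_{k-1})=\sum_{\alpha_1,\dots,\alpha_{k-2}}u_1^{(1,\alpha_1)}(x_1)\cdots u_{k-1}^{(\alpha_{k-2},\alpha_{k-1})}(x_{k-1})$,
any product sampling of the $k$-th unfolding factors as $A_k=(W\otimes I_{n_k})\,\widetilde V_k$. The inequality $\operatorname{rank}(A_k)\le\operatorname{rank}(\widetilde V_k)$ always holds, giving $\sup\operatorname{rank}(V_k)\ge r_k$; and whenever the $W$ block has full column rank $r_{k-1}$ the equality $\operatorname{rank}(A_k)=\operatorname{rank}(\widetilde V_k)$ holds, giving the reverse bound after enlarging to a product grid. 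Full column rank is achievable because the family $\{W^{(\alpha_{k-1})}\}$ is linearly independent, and this linear independence is itself inherited inductively: if $\sum_{\alpha_k}c_{\alpha_k}W^{(\alpha_k)}=0$ with $W^{(\alpha_k)}=\sum_{\alpha_{k-1}}W^{(\alpha_{k-1})}\,\phi^{(\alpha_k)}(\alpha_{k-1},\cdot)$, then independence of the $W^{(\alpha_{k-1})}$ forces $\sum_{\alpha_k}c_{\alpha_k}\phi^{(\alpha_k)}=0$, and independence of the $\phi^{(\alpha_k)}$ (which your Schur-complement construction delivers, since $\phi^{(i)}(x^{(m)})=\delta_{im}$ and $\psi^{(i)}(y^{(j)})=C_{ij}$) forces $c=0$. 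With these two clarifications written in, your proposal is a complete and correct proof of the theorem.
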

This theorem is a natural extension of tensor train 
decomposition in Theorem \ref{TTD} from discrete grids to the continuous domain \cite{oseledets2013constructive}.
It can be observed that adjacent TT-core functions are linked by a common index. For instance, 
$\alpha_1$ establishes the connection between the first TT-core function $u_1$ and the second TT-core function $u_2$.

For the $i$-th TT-core function $u_i(x_i)$ in \Cref{hx}, a subnetwork $u_i(x_i;\theta_i)$ with parameters $\theta_i$ is employed for parameterization, for $i=1,\dots,d$. This derives the following functional tensor train neural network (FTTNN)
\begin{align}\label{FTNN}
    u(\mathbf{x};\theta)&= u(x_1,\dots,x_d;\theta) \nonumber\\
&=\sum_{\alpha_0=1}^{r_0}\sum_{\alpha_1=1}^{r_1}\cdots\sum_{\alpha_d=1}^{r_d} u_1^{(\alpha_0,\alpha_1)}(x_1;\theta_1)u_2^{(\alpha_1,\alpha_2)}(x_2;\theta_2)\cdots u_d^{(\alpha_{d-1},\alpha_d)}(x_d;\theta_d) \nonumber\\
&=u_1(x_1;\theta_1)u_2(x_2;\theta_2)\cdots u_d(x_d;\theta_d),
\end{align}    
where $\theta=\{\theta_1,\dots,\theta_d\}$ are the total trainable parameters from $d$ different subnetworks.

\subsection{Algorithm}\label{alg_tt}
Our goal is to solve \Cref{physical_problem}. This problem is reformulated as an optimization problem,  where we aim to minimize the loss function $\mathrm{J}(\theta)$ in \Cref{loss_f}, with $\theta$ being the parameters of FTTNN.
In this subsection, we provide computational details of the loss function $\mathrm{J}(\theta)$ for a class of PDEs. 

The boundary loss of $\mathrm{J}(\theta)$ ensures that
the FTTNN solution $u(\mathbf{x};\theta)$ in \Cref{FTNN} satisfies the boundary conditions.
The boundary conditions can also be satisfied by embedding them into the network architecture such that the boundary loss of $\mathrm{J}(\theta)$ is zero.
For example,
when $\Omega=\Omega_1 \times \cdots \times \Omega_d$ with each $\Omega_i=(a_i,b_i)\subset \mathbb{R}$ for $i=1,\dots,d$, 
the $i$-th subnetwork
$u_i(x_i;\theta_i)$ is constructed in the following way to satisfy homogeneous boundary conditions,
\begin{align}\label{bc_s}
    u_i(x_i;\theta_i):=(x_i-a_i)(b_i-x_i)\hat{u_i}(x_i;\theta_i),
\end{align}
where $\hat{u_i}(x_i;\theta_i) $ is a fully-connected neural network (FCN).

To simplify the derivation of computing the residual loss of $\mathrm{J}(\theta)$, we consider the following second-order elliptic operator:
\begin{align*}
    \mathcal{L}u(\mathbf{x};\theta):=-c_1 \Delta u(\mathbf{x};\theta)+b(\mathbf{x})u(\mathbf{x};\theta),
\end{align*}
where $c_1$ is a positive constant and $b(\mathbf{x})$ is a bounded function. Assume that the source function $f(\mathbf{x})$ in \Cref{physical_problem} and $b(\mathbf{x})$ can be represented by the functional tensor train format,
\begin{align*}
&f(\mathbf{x})=f(x_1,\dots,x_d)=f_1(x_1)f_2(x_2)\cdots f_d(x_d),\\
&b(\mathbf{x})=b(x_1,\dots,x_d)=b_1(x_1)b_2(x_2)\cdots b_d(x_d).
\end{align*}
When explicit tensor-train forms of 
$f(\mathbf{x})$ and $b(\mathbf{x})$ are not available, these functions can instead be approximated within the tensor-train formats.
Then the residual loss is written as 
\begin{align*}
    &\int_{\Omega} (\mathcal{L}u(\mathbf{x};\theta)-f(\mathbf{x}))^2d\mathbf{x}=\int_{\Omega}\Big(-c_1 \Delta u(\mathbf{x};\theta)+b(\mathbf{x})u(\mathbf{x};\theta)-f(\mathbf{x})\Big)^2 d\mathbf{x}\\
    &=\int_{\Omega}\Big(-c_1 \Delta u(\mathbf{x};\theta)+b(\mathbf{x})u(\mathbf{x};\theta)\Big)^2-2\Big(-c_1 \Delta u(\mathbf{x};\theta)+b(\mathbf{x})u(\mathbf{x};\theta)\Big)f(\mathbf{x})+f^2(\mathbf{x})d\mathbf{x},
\end{align*}
which is equivalent to minimize the following term
\begin{align*}
    &\int_{\Omega}\Big(-c_1 \Delta u(\mathbf{x};\theta)+b(\mathbf{x})u(\mathbf{x};\theta)\Big)^2-2\Big(-c_1 \Delta u(\mathbf{x};\theta)+b(\mathbf{x})u(\mathbf{x};\theta)\Big)f(\mathbf{x})d\mathbf{x}\\
    &=\underbrace{c_1^2\int_{\Omega}\Big(\Delta u(\mathbf{x};\theta)\Big)^2 d\mathbf{x}}_{I_1} +\underbrace{\int_{\Omega}b^2(\mathbf{x})u^2(\mathbf{x};\theta)d\mathbf{x}}_{I_2}\underbrace{-2c_1\int_{\Omega}b(\mathbf{x})\Delta u(\mathbf{x};\theta)u(\mathbf{x};\theta)d\mathbf{x}}_{I_3}\\
    &+\underbrace{2c_1\int_{\Omega}\Delta u(\mathbf{x};\theta)f(\mathbf{x})d\mathbf{x}}_{I_4}\underbrace{-2\int_{\Omega}b(\mathbf{x})f(\mathbf{x})u(\mathbf{x};\theta) d\mathbf{x}}_{I_5}\\
    &=I_1+I_2+I_3+I_4+I_5.
\end{align*}
Before calculating $I_1,I_2,I_3,I_4$, and $I_5$,  substituting \Cref{FTNN} into $\Delta u(\mathbf{x};\theta)$ yields,
\begin{align*}
    \Delta u(\mathbf{x};\theta)=&\underbrace{\frac{\partial^2 u_1(x_1;\theta_1)}{\partial x_1^2}u_2(x_2;\theta_2)\cdots u_d(x_d;\theta_d)}_{y_1}\\
&+\underbrace{u_1(x_1;\theta_1)\frac{\partial^2 u_2(x_2;\theta_2)}{\partial x_2^2}\cdots u_d(x_d;\theta_d)}_{y_2}\\
    &+\cdots
    +\underbrace{u_1(x_1;\theta_1)u_2(x_2;\theta_2)\cdots \frac{\partial^2 u_d(x_d;\theta_d)}{\partial x_d^2}}_{y_d}\\
    =&y_1+y_2+\cdots+y_d.
\end{align*}
$I_1$ is computed by
\begin{align*}
    I_1&=c_1^2\int_{\Omega}\Big(\Delta u(\mathbf{x};\theta)\Big)^2 d\mathbf{x}\\
    &=c_1^2\int_{\Omega}(y_1+y_2+\cdots+y_d)(y_1+y_2+\cdots+y_d) d\mathbf{x},
\end{align*}
where
\begin{align}
    &\int_{\Omega}y_1y_d d\mathbf{x} \nonumber\\
    =&\int_{\Omega}\Big(\frac{\partial^2 u_1(x_1;\theta_1)}{\partial x_1^2}u_2(x_2;\theta_2)\cdots u_d(x_d;\theta_d)\Big)\Big(u_1(x_1;\theta_1)u_2(x_2;\theta_2) \nonumber \\
    &\cdots \frac{\partial^2 u_d(x_d;\theta_d)}{\partial x_d^2}\Big)d\mathbf{x} \nonumber\\
=&\int_{\Omega} \Big(\frac{\partial^2 u_1(x_1;\theta_1)}{\partial x_1^2}\otimes u_1(x_1;\theta_1)\Big)
\Big(u_2(x_2;\theta_2)\otimes u_2(x_2;\theta_2)\Big) \nonumber\\
&\cdots 
\Big(u_d(x_d;\theta_d)\otimes\frac{\partial^2 u_d(x_d;\theta_d)}{\partial x_d^2}\Big)d\mathbf{x} \nonumber\\
=&\int_{\Omega_1}\Big(\frac{\partial^2 u_1(x_1;\theta_1)}{\partial x_1^2}\otimes u_1(x_1;\theta_1)\Big)dx_1
\int_{\Omega_2}\Big(u_2(x_2;\theta_2)\otimes u_2(x_2;\theta_2)\Big) dx_2 \nonumber\\
&\cdots 
\int_{\Omega_d} \Big(u_d(x_d;\theta_d)\otimes\frac{\partial^2 u_d(x_d;\theta_d)}{\partial x_d^2}\Big) dx_d \nonumber\\
\approx &\sum_{i=1}^{n_1} w_{i,1}\Big(\frac{\partial^2 u_1(x_{i,1};\theta_1)}{\partial x_{i,1}^2}\otimes u_1(x_{i,1};\theta_1)\Big)
\sum_{i=1}^{n_2} w_{i,2}\Big(u_2(x_{i,2};\theta_2)\otimes u_2(x_{i,2};\theta_2)\Big) \nonumber\\
&\cdots 
\sum_{i=1}^{n_d}w_{i,d}\Big(u_d(x_{i,d};\theta_d)\otimes\frac{\partial^2 u_d(x_{i,d};\theta_d)}{\partial x_{i,d}^2}\Big).\label{Gauss}
\end{align}
Here, we apply the Gauss-Legendre quadrature rule to compute $d$ one-dimensional integrals at the final step in \Cref{Gauss}. $x_{i,1},x_{i,2},\dots,x_{i,d}$ denote the quadrature points and $w_{i,1},w_{i,2},\dots,w_{i,d}$ are the quadrature weights. 
The remaining terms in $I_1$ can also be computed in the same way as $\int_{\Omega}y_1y_d d\mathbf{x}$.
$I_2$ means that
\begin{align*}
I_2=&\int_{\Omega}b^2(\mathbf{x})u^2(\mathbf{x};\theta)d\mathbf{x}\\
    =&\int_{\Omega_1}\Big(b_1(x_1)\otimes b_1(x_1)\otimes u_1(x_1;\theta_1)\otimes u_1(x_1;\theta_1)\Big)dx_1\\
&\cdots \int_{\Omega_d}\Big(b_d(x_d)\otimes b_d(x_d)\otimes u_d(x_d;\theta_d)\otimes u_d(x_d;\theta_d)\Big)dx_d.
\end{align*}
$I_3$ is represented as 
\begin{align*}
    I_3=&-2c_1\int_{\Omega} b(\mathbf{x})\Delta u(\mathbf{x};\theta)u(\mathbf{x};\theta)d\mathbf{x}\\
    =&-2c_1\int_{\Omega} b(\mathbf{x})(y_1+y_2+\cdots+y_d)u(\mathbf{x};\theta)d\mathbf{x},
\end{align*}
where 
\begin{align*}
&\int_{\Omega}b(\mathbf{x})y_1u(\mathbf{x};\theta)d\mathbf{x}=\int_{\Omega}b(\mathbf{x})\Big(\frac{\partial^2 u_1(x_1;\theta_1)}{\partial x_1^2}u_2(x_2;\theta_2)\cdots u_d(x_d;\theta_d)\Big)u(\mathbf{x};\theta)d\mathbf{x}\\
=&\int_{\Omega_1}b_1(x_1)\otimes \Big(\frac{\partial^2 u_1(x_1;\theta_1)}{\partial x_1^2}\Big)\otimes u_1(x_1;\theta_1)dx_1 \int_{\Omega_2}\Big(b_2(x_2)\otimes u_2(x_2;\theta_2)\\
&\otimes u_2(x_2;\theta_2)\Big) dx_2 
\cdots \int_{\Omega_d}b_d(x_d)\otimes u_d(x_d;\theta_d)\otimes u_d(x_d;\theta_d) dx_d,
\end{align*}
and the remaining terms in $I_3$
can also be represented in the TT format.
$I_4$ is written as 
\begin{align*}
    I_4=2c_1\int_{\Omega}\Delta u(\mathbf{x};\theta)f(\mathbf{x}) d\mathbf{x}=2c_1\int_{\Omega}(y_1+y_2+\cdots+y_d)f(\mathbf{x})d\mathbf{x},
\end{align*}
where 
\begin{align*}
    \int_{\Omega}y_1f(\mathbf{x})d\mathbf{x}=&
    \int_{\Omega_1}\Big(\frac{\partial^2 u_1(x_1;\theta_1)}{\partial x_1^2}\Big)\otimes h_1(x_1)dx_1 \int_{\Omega_2}u_2(x_2;\theta_2)\otimes h_2(x_2) dx_2 \\
    &\cdots \int_{\Omega_d}u_d(x_d;\theta_d)\otimes h_d(x_d) dx_d,
\end{align*}
and the remaining terms in $I_4$ can also be expressed in the TT format.
$I_5$ is calculated as 
\begin{align*}
    &I_5=-2\int_{\Omega} b(\mathbf{x})f(\mathbf{x})u(\mathbf{x};\theta) d\mathbf{x}\\
    =&-2\int_{\Omega_1}b_1(x_1)\otimes h_1(x_1)\otimes u_1(x_1;\theta_1)dx_1
    \int_{\Omega_2}b_2(x_2)\otimes h_2(x_2)\otimes u_2(x_2;\theta_2)dx_2 \\
    &\cdots \int_{\Omega_d}b_d(x_d)\otimes h_d(x_d)\otimes u_d(x_d;\theta_d)dx_d.
\end{align*}
It can be observed that $I_1$, $I_2$, $I_3$, $I_4$ and $I_5$ are represented in the TT format.
Leveraging this TT representation, these high-dimensional integrals can be efficiently computed by evaluating only one-dimensional integrals via Gauss–Legendre quadrature rules, analogous to the computation of $\int_{\Omega}y_1y_d d\mathbf{x}$ in \Cref{Gauss}.
Therefore, by FTTNN, we can efficiently computes the residual loss of $\mathrm{J}(\theta)$ without introducing any statistical error, as no sampling methods are employed in the calculation of the residual loss.

For more general operators $\mathcal{L}$, additional integrals are needed, such as 
for Schrödinger eigenvalue problems,
\begin{align*}
    \int_{\Omega} \nabla u \cdot \nabla u d\mathbf{x}=&\int_{\Omega}\Big(\frac{\partial u_1(x_1;\theta_1)}{\partial x_1} u_2(x_2;\theta_2)\cdots u_d(x_d;\theta_d)\Big)^2
    +\Big(u_1(x_1;\theta_1)\frac{\partial u_2(x_2;\theta_2)}{\partial x_2} \\
    &\cdots u_d(x_d;\theta_d)\Big)^2
    +\cdots+\Big(u_1(x_1;\theta_1)u_2(x_2;\theta_2) \cdots \frac{\partial u_d(x_d;\theta_d)}{\partial x_d}\Big)^2 d\mathbf{x},
\end{align*}
where 
\begin{align*}
    &\int_{\Omega}\Big(\frac{\partial u_1(x_1;\theta_1)}{\partial x_1} u_2(x_2;\theta_2)\cdots u_d(x_d;\theta_d)\Big)^2 d\mathbf{x}\\
=& \int_{\Omega_1} \frac{\partial u_1(x_1;\theta_1)}{\partial x_1}\otimes \frac{\partial u_1(x_1;\theta_1)}{\partial x_1} dx_1  
\int_{\Omega_2}  u_2(x_2;\theta_2) \otimes  u_2(x_2;\theta_2) dx_2\\
&\cdots 
\int_{\Omega_d}  u_d(x_d;\theta_d) \otimes  u_d(x_d;\theta_d)dx_d, 
\end{align*} 
and the remaining terms in $\int_{\Omega} \nabla u \cdot \nabla u d\mathbf{x}$ can also be written in the TT format. 
Hence, $\int_{\Omega} \nabla u \cdot \nabla u d\mathbf{x}$ can be represented in the TT format and then efficiently computed, like the calculation of $\int_{\Omega}y_1y_d d\mathbf{x}$ in \Cref{Gauss}. 

The training process of FTTNN is summarized in Algorithm \ref{alg}, where the output is the approximate PDE solution $u(\mathbf{x};\theta^*)$ with optimal parameters $\theta^*$ of FTTNN. The training is carried out in two stages: the first stage employs the Adam optimizer, followed by the LBFGS optimizer in the second stage.
\begin{algorithm}[!htb]
	\caption{Training the functional tensor train neural network (FTTNN)}
	\label{alg}
	\begin{algorithmic}[1]
		\REQUIRE FTT-ranks, dimension $d$, quadrature points $X$, quadrature weights $W$, learning rate for the Adam optimizer $\eta_a$, learning rate for the LBFGS optimizer $\eta_l$, maximum epoch number for Adam optimizer $E_a$, maximum epoch number for the LBFGS optimizer $E_l$.
         \STATE Use FTT-ranks for designing $d$ subnetworks of the functional tensor train network (see \Cref{FTNN}).
           \STATE Initialize parameters $\theta=(\theta_1,\dots,\theta_d)$ of the functional tensor train network.
           \FOR {$i = 1:E_a+E_l$}
           \STATE Write the loss function $\mathrm{J}(\theta)$ in the TT-format (see \Cref{alg_tt}).
          \STATE Estimate the loss function $\mathrm{J}(\theta)$ by using  $X$ and  $W$, similar to the calculation of $\int_{\Omega}y_1y_d d\mathbf{x}$ in \Cref{Gauss}.
          \STATE  Compute the gradients $\nabla_{\theta} \mathrm{J}(\theta)$.
            \IF{ $i>E_a$}
            \STATE  Update the parameters $\theta$ by LBFGS optimizer with learning rate $\eta_l$.
            \ELSE
             \STATE  Update the parameters $\theta$ by Adam optimizer with learning rate $\eta_a$.
            \ENDIF
		\ENDFOR
         \STATE Let $\theta^*=\theta$, where $\theta$ includes the parameters of the functional tensor train network at the last epoch.
		\ENSURE The approximate  PDE solution $u(\mathbf{x};\theta^*)$.
	\end{algorithmic}
\end{algorithm}
\section{Theoretical analysis}\label{section3}
In this section, we follow \cite{wang2024solving} 
to illustrate the approximation property of FTTNN.

\begin{theorem}[Approximation error]
    Assume $\Omega=\Omega_1 \times \Omega_2 \times \cdots \times \Omega_d$ with $\Omega_i\subset \mathbb{R}$, for $i=1,\dots,d$, and a tensor function $u(\mathbf{x})\in  L^2(\Omega)$.
    Then for any tolerance $\epsilon>0$, there exist functional tensor train rank $(r_0,r_1,\dots,r_d)$
    and the corresponding functional tensor train neural network $u(\mathbf{x};\theta)$ defined by \Cref{FTNN} such that the following approximation property holds,
    \begin{align*}
        \Vert u(\mathbf{x})-u(\mathbf{x};\theta) \Vert_{L^2(\Omega)} \leq \epsilon.
    \end{align*}
\end{theorem}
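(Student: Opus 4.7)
I would prove the approximation bound in two stages that mirror the construction of FTTNN in \Cref{FTNN}: first approximate $u$ by a genuine functional tensor train with finite ranks, then approximate each TT-core function by a fully-connected subnetwork. Throughout, I keep track of errors so that each stage contributes at most $\epsilon/2$ and conclude by the triangle inequality.

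\textbf{Stage 1 (finite-rank FTT truncation).} Because $\Omega$ is a product domain and $u \in L^2(\Omega)$, the natural isometry $L^2(\Omega) \cong L^2(\Omega_1) \otimes \cdots \otimes L^2(\Omega_d)$ lets me view $u$ as a Hilbert--Schmidt kernel on each of the bipartite unfoldings $u(x_1,\dots,x_k \,;\, x_{k+1},\dots,x_d)$. Applying the Schmidt (SVD) decomposition successively to these unfoldings---the continuous analogue of the TT-SVD underlying \Cref{TTD}---and truncating each singular-value expansion at a level $r_k$ large enough so that the squared tail is below $(\epsilon/2)^2/d$, I obtain
\[
\tilde u(\mathbf{x}) \;=\; \sum_{\alpha_0,\dots,\alpha_d} \tilde u_1^{(\alpha_0,\alpha_1)}(x_1)\cdots \tilde u_d^{(\alpha_{d-1},\alpha_d)}(x_d),
\]
with $\|u-\tilde u\|_{L^2(\Omega)} \le \epsilon/2$ and finite TT-ranks $(r_0,\dots,r_d)$, $r_0=r_d=1$. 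The square-summability of Hilbert--Schmidt singular values is what makes the tail bound available; the telescoping error estimate for iterated truncations is standard.

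\textbf{Stage 2 (neural-network approximation of each core).} Each truncated TT-core $\tilde u_i \colon \Omega_i \to \mathbb{R}^{r_{i-1}\times r_i}$ is just a finite collection of $r_{i-1}r_i$ bounded scalar functions on the one-dimensional compact domain $\Omega_i$. By the universal approximation theorem for fully-connected networks, for any prescribed $\delta>0$ there exist networks $u_i^{(\alpha_{i-1},\alpha_i)}(\cdot;\theta_i)$ with
\[
\bigl\|\tilde u_i^{(\alpha_{i-1},\alpha_i)} - u_i^{(\alpha_{i-1},\alpha_i)}(\cdot;\theta_i)\bigr\|_{L^\infty(\Omega_i)} \;\le\; \delta
\qquad \text{for every } \alpha_{i-1},\alpha_i.
\]
Assembling these subnetworks according to \Cref{FTNN} yields the candidate FTTNN $u(\mathbf{x};\theta)$.

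\textbf{Stage 3 (error propagation and conclusion).} To control $\|\tilde u - u(\cdot;\theta)\|_{L^2(\Omega)}$, I write the difference as a telescoping sum in which one TT-core at a time is replaced by its neural-network approximant. Each summand is a product of $d-1$ bounded factors times one difference of size $O(\delta)$; using the compactness of $\Omega_i$ and the $L^\infty$ bounds on the $\tilde u_i$ (hence on the $u_i(\cdot;\theta_i)$ up to an additive $\delta$), I obtain a bound of the form
\[
\|\tilde u - u(\cdot;\theta)\|_{L^2(\Omega)} \;\le\; C(d,r_0,\dots,r_d, M)\,\delta,
\]
where $M=\max_i \|\tilde u_i\|_\infty + 1$. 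Choosing $\delta$ small enough that the right-hand side is at most $\epsilon/2$ and combining with Stage 1 via the triangle inequality gives the claimed bound.

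\textbf{Main obstacle.} The delicate step is Stage 1: producing a controllable finite-rank FTT approximation of a general $u\in L^2(\Omega)$ when $d\ge 3$. For $d=2$ this is exactly Schmidt/SVD of a Hilbert--Schmidt kernel and is immediate; for higher $d$ one has to iterate SVDs on successive unfoldings and show that the $L^2$ errors accumulate additively (in squared norm) rather than blowing up---this is the continuous counterpart of the quasi-optimality estimate for TT-SVD. Once that estimate is in place, Stages 2 and 3 are routine applications of universal approximation and the telescoping product bound.
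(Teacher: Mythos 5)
Your proposal arrives at the right conclusion but takes a genuinely different route in Stage~1 and has a small but real gap in Stage~2. For Stage~1, the paper does not use the iterated Schmidt/SVD machinery at all. It invokes the density of finite linear combinations of elementary tensors in $L^2(\Omega_1)\otimes\cdots\otimes L^2(\Omega_d)$ to produce a CP-type approximant $h(\mathbf{x})=\sum_{\alpha=1}^{r}h_1^{\alpha}(x_1)\cdots h_d^{\alpha}(x_d)$, and then embeds $h$ into the functional TT format by inserting Kronecker deltas (so every internal rank equals $r$). This sidesteps entirely the quasi-optimality estimate for iterated truncations that you flag as the "delicate step": for a pure existence statement one does not need the near-optimal TT-SVD ranks, only \emph{some} finite-rank FTT within $\epsilon_1$, and the CP-to-TT embedding delivers that cheaply. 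Your TT-SVD route would in principle buy sharper ranks, but at the cost of proving the quasi-optimality lemma you acknowledge is missing; the paper's argument is more elementary and self-contained.

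The gap in Stage~2 is this: the cores $\tilde u_i^{(\alpha_{i-1},\alpha_i)}$ produced by either Stage-1 construction are $L^2(\Omega_i)$ functions, and there is no reason they are continuous or even essentially bounded, so you cannot call them "bounded scalar functions" and apply universal approximation in $L^{\infty}(\Omega_i)$ directly. You need an intermediate step: first approximate each core in $L^2(\Omega_i)$ by a continuous function (density of $C(\overline{\Omega_i})$ in $L^2$), then approximate that continuous function uniformly by an FCN, and finally pass back to an $L^2$ bound. The paper does exactly this, which is also why its Stage-3 bookkeeping is carried out entirely in $L^2$ using the factorization $\Vert\prod_i g_i\Vert_{L^2(\Omega)}=\prod_i\Vert g_i\Vert_{L^2(\Omega_i)}$, giving the telescoping bound $(M+\delta)^d-M^d$ per multi-index and a total of $R^{d-1}\bigl((M+\delta)^d-M^d\bigr)$. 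Your telescoping argument in Stage~3 is the same idea; if you carry $L^{\infty}$ bounds you merely pick up harmless $|\Omega_i|^{1/2}$ factors, but you must first close the gap in Stage~2 to have those $L^{\infty}$ bounds at all.
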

\begin{proof}
According to the isomorphism relation $L^2(\Omega)\cong L^2(\Omega_1)\otimes \cdots \otimes L^2(\Omega_d) $, for any $\epsilon_1$, there exists 
a positive integer $r$ such that
\begin{align}\label{p1}
    \Vert u(\mathbf{x})-h(\mathbf{x})\Vert_{L^2(\Omega)}<\epsilon_1,
\end{align}
where $h(\mathbf{x})=\sum_{\alpha=1}^{r}h_1^{\alpha}(x_1)\cdots h_d^{\alpha}(x_d), h_i^{\alpha}(x_i)\in L^2(\Omega_i), \alpha=1,\dots, r, i=1,\dots,d$. $h(\mathbf{x})$ can be rewritten in the functional tensor train format by using Kronecker delta,
\begin{align*}
&h(\mathbf{x})=\sum_{\alpha_1=1}^{r_1}\sum_{\alpha_2=1}^{r_2}\cdots \sum_{\alpha_{d-1}=1}^{r_{d-1}} h_1^{\alpha_1}(x_1)\delta(\alpha_1,\alpha_2)h_2^{\alpha_2}(x_2)\cdots \delta(\alpha_{d-2},\alpha_{d-1})h_d^{\alpha_{d-1}}(x_d)\\
&=\sum_{\alpha_0=1}^{r_0}\sum_{\alpha_1=1}^{r_1}\sum_{\alpha_2=1}^{r_2}\cdots \sum_{\alpha_{d-1}=1}^{r_{d-1}}\sum_{\alpha_d=1}^{r_d} h_1^{(\alpha_0,\alpha_1)}(x_1)\underbrace{\delta(\alpha_1,\alpha_2)h_2^{\alpha_2}(x_2)}_{h_2^{(\alpha_1,\alpha_2)}(x_2)}\cdots h_d^{(\alpha_{d-1},\alpha_d)}(x_d)\\
&=\sum_{\alpha_0=1}^{r_0}\sum_{\alpha_1=1}^{r_1}\sum_{\alpha_2=1}^{r_2}\cdots \sum_{\alpha_{d-1}=1}^{r_{d-1}}\sum_{\alpha_d=1}^{r_d} h_1^{(\alpha_0,\alpha_1)}(x_1)h_2^{(\alpha_1,\alpha_2)}(x_2)\cdots   h_d^{(\alpha_{d-1},\alpha_d)}(x_d),
\end{align*}
where we choose $r_1,\dots,r_{d-1}$ such that $r=\min\{r_1,\dots,r_{d-1}\}$.

According to \cite{evans2022partial}, there exists a continuous function such that $h_i^{(\alpha_{i-1},\alpha_i)}(x_i)$, for $i=1,\dots,d$, can be approximated with arbitrary accuracy under the norm $L^2$. In addition,  \cite{hornik1991approximation,leshno1993multilayer} show that FCN with at least one hidden layer and a non-polynomial activation function can approximate arbitrary continuous functions on a compact set under the norm $L^2$. 
There exist FCN $u_{i}^{(\alpha_{i-1},\alpha_i)}(x_i;\theta_i),i=1,\dots,d$, such that for any $\delta>0$,
\begin{align}
    \Vert h_i^{(\alpha_{i-1},\alpha_i)}(x_i)-u_{i}^{(\alpha_{i-1},\alpha_i)}(x_i;\theta_i)\Vert_{L^2(\Omega_i)}< \delta. \label{p2}
\end{align}
Here, $\theta_i$ are the parameters of FCN.

For any $g_i(x_i)\in L^2(\Omega_i),i=1,\dots,d$, 
\begin{align}\label{s_inequality}
    \Big \Vert \prod_{i=1}^d g_i(x_i)\Big \Vert_{L^2(\Omega)}=\prod_{i=1}^{d}\Vert g_i(x_i) \Vert_{L^2(\Omega_i)}^2.
\end{align}

Denote $e^{(\alpha_{i-1},\alpha_i)}_i(x_i):=h_i^{(\alpha_{i-1},\alpha_i)}(x_i)-u_{i}^{(\alpha_{i-1},\alpha_i)}(x_i;\theta_i),\ i=1,\dots, d$, $M:=\max\limits_i  \Vert h_i^{(\alpha_{i-1},\alpha_i)}(x_i)\Vert _{L^2(\Omega_i)}$, $\boldsymbol{\alpha}:=\{\alpha_0,\dots,\alpha_d\}$. We apply the triangle inequality, \Cref{p1}, \Cref{p2} and \Cref{s_inequality} to 
\begin{align*}
    &\Vert h(\mathbf{x})-u(\mathbf{x};\theta)\Vert_{L^2(\Omega)}\\
    =&\Bigg\Vert \sum_{\boldsymbol{\alpha}} h_1^{(\alpha_0,\alpha_1)}(x_1)\cdots   h_d^{(\alpha_{d-1},\alpha_d)}(x_d)- \sum_{\boldsymbol{\alpha}} \Big(h_1^{(\alpha_0,\alpha_1)}(x_1)-e^{(\alpha_0,\alpha_1)}_1(x_1)\Big)\\
    &\cdots   \Big(h_d^{(\alpha_{d-1},\alpha_d)}(x_d)-e^{(\alpha_{d-1},\alpha_d)}_d(x_d)\Big)\Bigg\Vert_{L^2(\Omega)}\\
\leq& \sum_{\boldsymbol{\alpha}} \Bigg\Vert h_1^{(\alpha_0,\alpha_1)}(x_1)\cdots   h_d^{(\alpha_{d-1},\alpha_d)}(x_d)-\Big(h_1^{(\alpha_0,\alpha_1)}(x_1)-e^{(\alpha_0,\alpha_1)}_1(x_1)\Big)\\
&\cdots   \Big(h_d^{(\alpha_{d-1},\alpha_d)}(x_d)-e^{(\alpha_{d-1},\alpha_d)}_d(x_d)\Big) \Bigg\Vert_{L^2(\Omega)}\\
\leq &\sum_{\boldsymbol{\alpha}}((M+\delta)^d-M)\\
\leq & R^{d-1}\Big((M+\delta)^d-M\Big),
\end{align*}
where $R=\max\{r_0,r_1,\dots,r_d\}$.
Based on triangle inequality, 
\begin{align*}
    \Vert u(\mathbf{x})-u(\mathbf{x};\theta)\Vert_{L^2(\Omega)}
    &\leq \Vert u(\mathbf{x})-h(\mathbf{x})\Vert_{L^2(\Omega)}+\Vert h(\mathbf{x})-u(\mathbf{x};\theta)\Vert_{L^2(\Omega)}\\
    &\leq \epsilon_1+ R^{d-1}\Big((M+\delta)^d-M^d\Big)\leq \epsilon/2+\epsilon/2=\epsilon.
\end{align*}
The proof has been completed.
\end{proof}

\section{Numerical experiments}\label{section4}
In this section, five experiments are conducted to demonstrate the accuracy of FTTNN.
In test problem 1, FTTNN is compared with discrete tensor train decomposition for function approximation.
In test problem 2, we solve three-dimensional Poisson equations on irregular domains.
Additionally, we solve high-dimensional Poisson equations, high-dimensional Helmholtz equations, and Schrödinger eigenvalue problems on regular domains in test problem 2--5, respectively.
To evaluate the accuracy of FTTNN, the relative errors are defined by
\begin{equation}\label{rl}
    \text{Relative error}=\frac{\Vert u(\mathbf{x};\theta)-u(\mathbf{x}) \Vert_2}{\Vert u(\mathbf{x}) \Vert_2},
\end{equation}
where $u(\mathbf{x};\theta)$ 
is the solution approximated by FTTNN, and $u(\mathbf{x})$ is the exact solution. 

In \Cref{alg}, the functional tensor train rank is set to $\text{FTT-rank}[u]=(1,r,\dots,r,1)$.
Each subnetwork of FTTNN and the neural network of PINN are both FCNs with one hidden layer.
All neural networks employ the sine function as their activation function and are trained exclusively on a single NVIDIA A100 Tensor Core GPU.

\subsection{Test problem 1: Function approximation}
Approximating the following function with a singularity is tested:
\begin{align*}
    u(\mathbf{x})=\frac{1}{\sqrt{\sum_{i=1}^d x_i^2+10^{-12}}}, \quad \mathbf{x} \in [-1,1]^d.
\end{align*}

We consider two cases with $d=4$ and $d=6$.
The hidden layer of FTTNN contains 30 hidden neurons. 
To train FTTNN, we sample 100,000 points uniformly from the $d$-dimensional hypercube $[-1,1]^d$ (denoted by $\mathbf{x}\sim U([-1,1]^d)$)
and generate their corresponding labels $u(\mathbf{x})$. This forms the training dataset $\{{\mathbf{x}}^{(i)},u({\mathbf{x}}^{(i)})\}_{i=1}^{100000}$.
The training process employs the Adam optimizer with a learning rate of 0.003 for the initial 5,000 epochs. Subsequently, we switch to the L-BFGS optimizer with a learning rate of 0.1 for an additional 1,000 epochs. For comparison, we also use the discrete tensor train decomposition (TTD) to approximate the function $u(\mathbf{x})$. In this approach, we select $15\%$ points from a $30^d$ uniform grid as the training data points. Based on these training data points, we apply the tensor recovery method using the TTD to recover the function $u(\mathbf{x})$ on the remaining points of the grid. The TT-cores of TTD are estimated using the TT-ALS algorithm \cite{steinlechner2016riemannian}. 

\Cref{Com1} shows the number of parameters and relative errors of FTTNN and TTD in the case of  $d=4,r=2$ or $d=6,r=3$. 
The relative errors are computed through \Cref{rl} using the approximate solution and the exact solution. 
The parameters of FTTNN include weights and biases, while those of TTD consist of all the TT-cores. 
Although FTTNN has more parameters than TTD, its relative error is significantly smaller. 
This improvement can be attributed to the fact that FTTNN is not restricted to uniform grids. 
Actually, we need finer meshes near the singularities for more accurate approximation. This is difficult to achieve with discrete tensor train decomposition, 
and highlights the flexibility of FTTNN.
\begin{table}[htbp]
% \footnotesize
\caption{Comparison FTTNN with discrete tensor train decomposition (TTD), test problem 1.}\label{Com1}
\begin{center}
  \begin{tabular}{|c|c|c|c|c|} \hline
    Method & $d$ & $r$ &Number of parameters&Relative error  \\ \hline
    TTD& 4  & 2&360&$4.1\times 10^{-1}$ \\
        FTTNN& 4 &2&4332& $4.9\times 10^{-2}$ \\
        TTD& 6 & 3&5400& $2.2\times 10^{-1}$\\
        FTTNN& 6 &3&7242&$3.6\times 10^{-3}$ \\\hline
  \end{tabular}
\end{center}
\end{table}

\subsection{Test problem 2: Three-dimensional Poisson equation on irregular domains}
We consider the three-dimensonal Poisson equation on L shape domain $\Omega=\Omega_0 \times (-1,1)$ where $\Omega_0=(-1,1)^2\setminus ([0,1)\times (-1,0])$
\begin{equation*}
    -\Delta u(\mathbf{x})=f(\mathbf{x}), \quad \mathbf{x} \in \Omega,
\end{equation*}
where the source fucntion $f(\mathbf{x})=6x_1(x_2-x_2^3)(1-x_3^2)+6x_2(x_1-x_1^3)(1-x_3^2)+2(x_2-x_2^3)(x_1-x_1^3)$, and homogeneous Dirichlet boundary conditions are imposed on $\partial \Omega$. Its exact solution is 
$
    u(\mathbf{x})=(x_1-x_1^3)(x_2-x_2^3)(1-x_3^2)
$.

The hidden layer of FTTNN contains 50 neurons. FTTNN is compared with PINN, whose hidden layer is configured with 90 neurons to match the total number of parameters of FTTNN. The parameters of FTTNN and PINN are obtained by minimizing the loss function:
\begin{align}
    \mathrm{J}(\theta)=&\int_{-1}^1\int_{-1}^1\int_{-1}^0 (-\Delta u(\mathbf{x};\theta)-f)^2 dx_1 dx_2 dx_3 \nonumber\\
    &+\int_{-1}^1\int_0^1\int_0^1 (-\Delta u(\mathbf{x};\theta)-f)^2 dx_1 dx_2 dx_3+\beta \int_{\partial \Omega} (u(\mathbf{x};\theta)-0)^2 d\mathbf{x}, \label{loss_L}
\end{align}
where the first two terms correspond to the residual loss, and the last term represents the boundary loss.
We set $\beta=100, r=2$ in \Cref{loss_L}.
To efficiently compute the residual loss in FTTNN,
$(-1,0)$ and $(0,1)$ are each divided into 20 subintervals, with 20 quadrature points selected within each subinterval. Additionally, $(-1,1)$ is partitioned into 40 subintervals, with 20 quadrature points chosen in each subinterval.
The boundary loss in FTTNN is estimated by the rectangle rule, where  $\partial \Omega$ is equally divided into subdomains with $\Delta x_1=\Delta x_2=\Delta x_3=1/10$.
For PINN, its residual loss is computed using 12,000 points generated by uniformly sampling from $\Omega$, and its boundary loss is calculated in the same manner as in FTTNN.
In \Cref{alg}, the training hyperparameter are specified as follows: the Adam optimizer runs for a maximum of $E_{a}=5000$ epochs with a learning rate $\eta_{a}=0.003$
, and the LBFGS optimizer is limited to $E_{l}=1000$ epochs with a learning rate $\eta_{l}=0.1$.
PINN is trained following the same procedure.

As shown in \Cref{L_relative}, the relative errors computed by FTTNN and PINN indicate that FTTNN attains a lower relative error than PINN. \Cref{plotLshape}(a) is the exact solution at $x_3=0.5$ for this test problem, and \Cref{plotLshape}(b) shows the solution at $x_3=0.5$ obtained by FTTNN, where it can be seen that they are visually indistinguishable. Their difference $u(\mathbf{x};\theta)-u$ is shown in \Cref{plotLshape}(c), clearly demonstrating that the pointwise error is small.
The results imply that FTTNN can accurately solve PDEs on irregular domains, but discrete tensor train decomposition is not directly applicable to such PDEs. 
% \begin{figure}[!htb]
%     \centering
%     \includegraphics[width=0.8\linewidth]{image/Lshape.png}
%     \caption{Solutions of three-dimensional Poisson equations at $x_3=0.5$, test problem 2 }
%     \label{plotLshape}
% \end{figure}
\begin{table}[htbp]
% \footnotesize
 \caption{Relative error, test problem 2.}
    \label{L_relative}
\begin{center}
  \begin{tabular}{|c|c|c|} \hline
    d &  FTTNN &PINN  \\ \hline
3 & $7.2\times 10^{-4}$  &$1.3\times 10^{-3}$\\ \hline
  \end{tabular}
\end{center}
\end{table}
\begin{figure}[!htb]
	\centering
     \subfloat[][The exact solution $u(\mathbf{x})$]{\includegraphics[width=.3\textwidth]{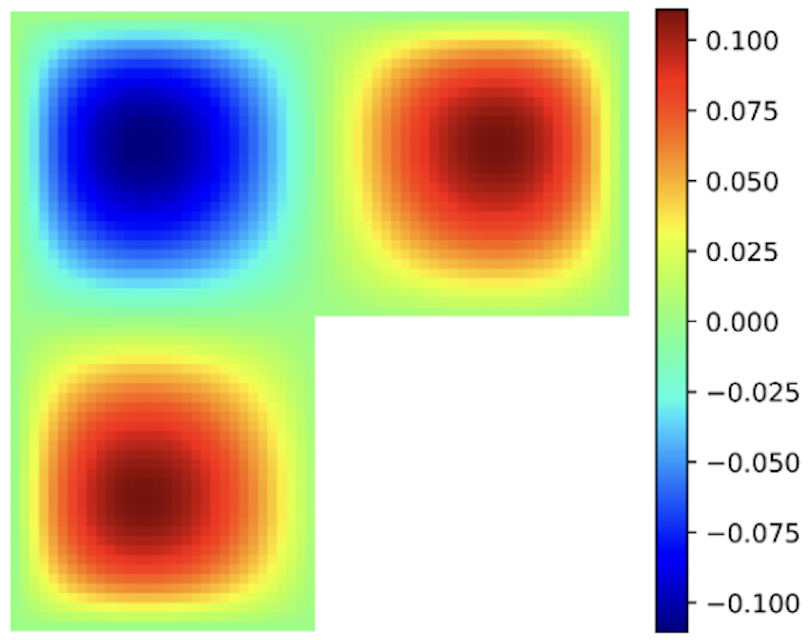}}
     \subfloat[][FTTNN solution $u(\mathbf{x};\theta)$]{\includegraphics[width=.3\textwidth]{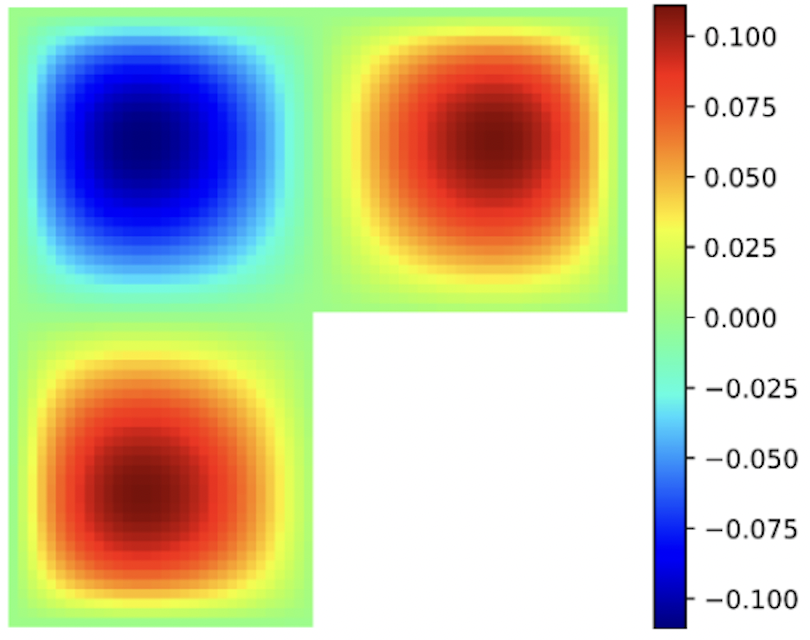}}
     \subfloat[][$u(\mathbf{x};\theta)-u$]{\includegraphics[width=.3\textwidth]{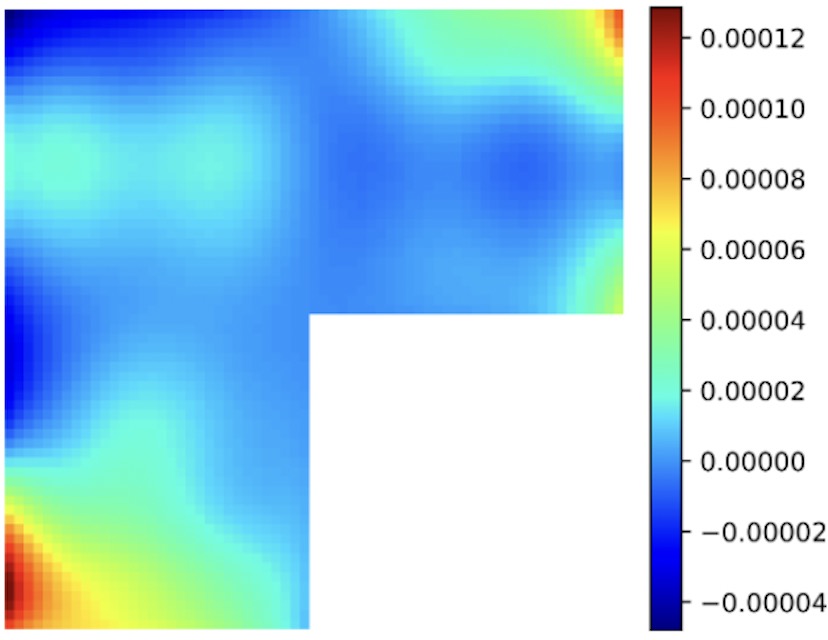}}
	\caption{Solutions of three-dimensional Poisson equations on L shape domain at $x_3=0.5$, test problem 2.}
    \label{plotLshape}
\end{figure}

\subsection{Test problem 3: High-dimensional Poisson equation on regular domains}\label{t3_section}
The high-dimensional Poisson equation with homogeneous Dirichlet boundary condition is considered,
\begin{equation*}
    -\Delta u(\mathbf{x})=(d+3)\pi^2\sum_{k=1}^d\sin(2\pi x_k)\prod_{i\neq k}^d \sin(\pi x_i),\quad \mathbf{x} \in (-1,1)^d,
\end{equation*}
whose exact solution is given by
$   u(\mathbf{x})=\sum_{k=1}^d\sin(2\pi x_k)\prod_{i\neq k}^d \sin(\pi x_i)
$.

Three cases ($d=3,5,7$) are considered.
FTTNN is compared with PINN, whose hidden layer consists of 100 neurons.
For the same total number of parameters in both PINN and FTTNN, the hidden layer size and the ranks of FTTNN for different dimensions are provided in \Cref{setting1}.
To efficiently compute the residual loss in FTTNN,
we decompose each $\Omega_i, i=1,\dots,d$ into 30 equal subintervals and 
choose 30 quadrature points on each subinterval. For PINN, its residual loss is evaluated using 5,000, 10,000, and 20,000 sample points for $d=3,5$, and $7$, respectively.
The boundary conditions for both FTTNN and PINN are enforced using a hard constraint, as described in \Cref{bc_s}.
The hyperparameters for training FTTNN and PINN are the same as those of test problem 2.
\begin{table}[htbp]
% \footnotesize
\caption{The settings for TTNN, test problem 3.}\label{setting1}
\begin{center}
  \begin{tabular}{|c|c|c|} \hline
    $d$ &   $r$ & hidden layer size \\ \hline
    3 & 2  &50\\
        5 & 3 &40\\
        7 &  4 &35\\ \hline
  \end{tabular}
\end{center}
\end{table}

\Cref{test1_error} gives the relative errors of FTTNN and PINN in terms of $d$. For $d=3$, FTTNN and PINN yield comparable relative errors. As the dimension increases to 
$d=5$, the relative error of FTTNN becomes one order of magnitude smaller than that of PINN. With a further increase in dimension to $d=7$, the relative error of FTTNN is reduced by two orders of magnitude compared with that of PINN. The relative errors of FTTNN and PINN during the training process ($d=3$) are shown in \Cref{rel1_plot}. As the number of epochs increases, it is clear that the relative errors of FTTNN and PINN both converge to a small value, and the relative errors of FTTNN decrease faster than that of PINN.  
\Cref{problem3_plot} also illustrates the solutions at $x_3=0.1$ estimated by FTTNN and PINN ($d=3$). From \Cref{problem3_plot}(b) and \Cref{problem3_plot}(c), it can be seen that the estimated solutions computed by FTTNN and PINN both agree with the exact solution (see \Cref{problem3_plot}(a)).
\begin{table}[htbp]
% \footnotesize
\caption{Relative error, test problem 3.}
    \label{test1_error}
\begin{center}
  \begin{tabular}{|c|c|c|} \hline
   $d$ &   FTTNN &PINN  \\ \hline
    3 & $2.6\times 10^{-5}$  &$5.5\times 10^{-5}$\\
        5 & $2.5\times 10^{-4}$ &$2.0\times 10^{-3}$\\
        7 &  $1.3\times 10^{-4}$ &$8.0\times 10^{-2}$\\ \hline
  \end{tabular}
\end{center}
\end{table}

\begin{figure}[!htb]
    \centering
\includegraphics[width=0.7\linewidth]{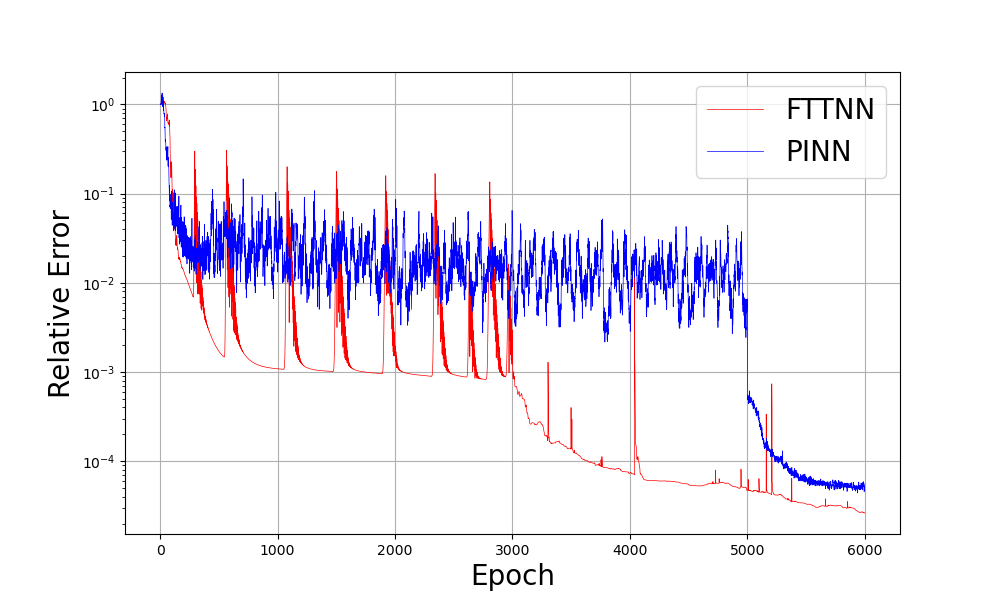}
     \caption{Relative error vs epoch ($ d=3, r=2$), test problem 3: }
    \label{rel1_plot}
\end{figure}

\begin{figure}[!htb]
	\centering
     \subfloat[][The exact solution]{\includegraphics[width=.3\textwidth]{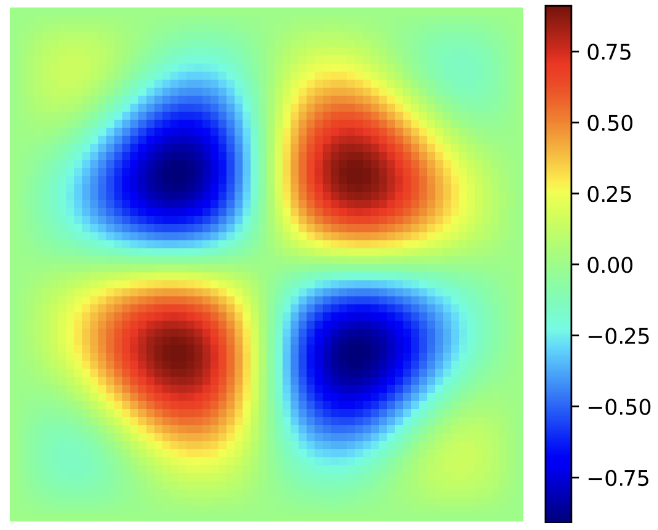}}
     \subfloat[][FTTNN solution]{\includegraphics[width=.3\textwidth]{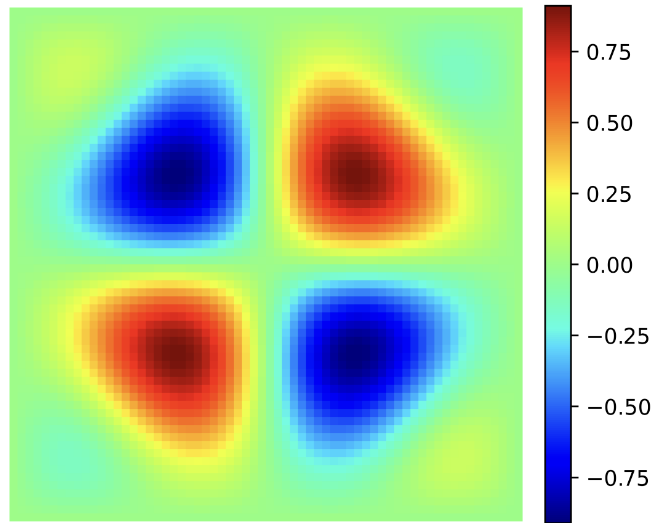}}
     \subfloat[][PINN solution]{\includegraphics[width=.3\textwidth]{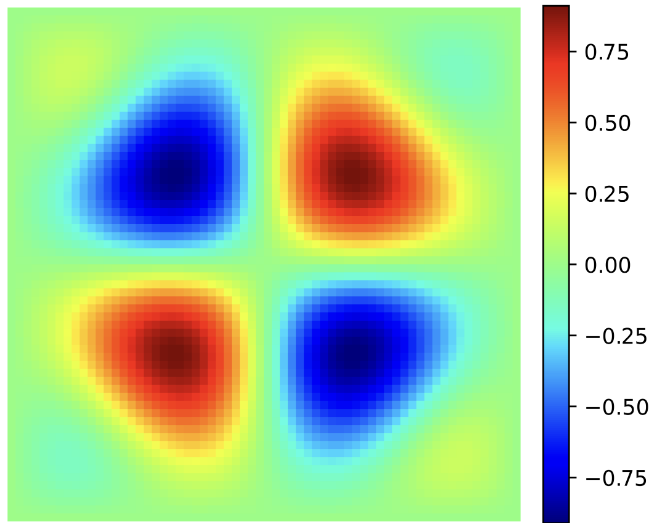}}
	\caption{Solutions of three-dimensional Poisson equations on regular domains at $x_3=0.1$, test problem 3.}
    \label{problem3_plot}
\end{figure}

\subsection{Test problem 4: High-dimensional Helmholtz equation}
\subsubsection{High-dimensional Helmholtz equation}\label{h1}
We try to solve the following Helmholtz
equation with homogeneous Dirichlet boundary condition,
\begin{equation*}
    -\Delta u(\mathbf{x})-u(\mathbf{x})=(4\pi^2d-1)\prod_{k=1}^d \sin(2\pi x_k),\quad \mathbf{x} \in (0,1)^d,
\end{equation*}
whose exact solution is 
$
    u(\mathbf{x})=\prod_{k=1}^d \sin(2\pi x_k)
$.

Two cases ($d=3,5$) are considered. 
FTTNN is compared with PINN, whose hidden layer contains 100 neurons. To ensure a comparable number of parameters, the hidden layer size and the ranks of the FTTNN are specified in \Cref{setting2}.
For efficiently computing the residual loss, each $\Omega_i,i=1,\dots,d$ is divided into 40 equal intervals and then choose 40 quadrature points on each subinterval.
For computing the residual loss in PINN with $d=3,5$, we generate 5000 points and 10000 points by uniformly sampling from $\Omega$, respectively.
The boundary conditions for both FTTNN and PINN are enforced in accordance with test problem 3.
The hyperparameters for training FTTNN and PINN are the same as those of test problem 3.
\begin{table}[htbp]
% \footnotesize
\caption{The settings for TTNN, test problem \ref{h1}.}
    \label{setting2}
\begin{center}
  \begin{tabular}{|c|c|c|} \hline
   $d$ &   $r$ & hidden layer size   \\ \hline
    3 & 2  &40\\
        5 & 3 &50\\ \hline
  \end{tabular}
\end{center}
\end{table}

\Cref{test2_relative} shows the relative errors computed by FTTNN and PINN. Although the relative error of FTTNN is a little larger than that of PINN in the case of $d=3$, its relative error becomes one order of magnitude smaller than the relative error of PINN as the dimension increases to $d=5$.
The relative errors of FTTNN and PINN during the training process ($d=5$) are shown in \Cref{plot2}, where it is obvious that the relative error of FTTNN decreases faster than that of PINN and finally converges to $1.3\times 10^{-4}$.
Although the relative error of FTTNN exhibits large fluctuations during the training process with the Adam optimizer, a learning rate decay strategy can be adopted to alleviate the issue.
\Cref{problem41_plot} also provides the solutions computed by FTTNN and PINN in the case of $d=5$ and $x_3=x_4=x_5=0.1$. In \Cref{problem41_plot}(a), the exact solution for this test problem is given. The estimated FTTNN solution shown in \Cref{problem41_plot}(b) is consistent with the exact solution, but the estimated PINN solution (denoted by $\hat{u}(\mathbf{x})$) shown in \Cref{problem41_plot}(e) slightly differs from the exact solution. In addition, the difference $u(\mathbf{x};\theta)-u(\mathbf{x})$ of FTTNN is far less than the difference $\hat{u}(\mathbf{x})-u(\mathbf{x})$ of PINN.
\begin{table}[htbp]
% \footnotesize
 \caption{Relative error, test problem \ref{h1}.}
    \label{test2_relative}
\begin{center}
  \begin{tabular}{|c|c|c|} \hline
    $d$ &   FTTNN &PINN  \\ \hline
3 & $8.0\times 10^{-5}$  &$4.0\times 10^{-5}$\\
        5 &  $1.3\times 10^{-4}$&$1.5\times 10^{-2}$\\ \hline
  \end{tabular}
\end{center}
\end{table}

\begin{figure}[!htb]
    \centering
    \includegraphics[width=0.7\linewidth]{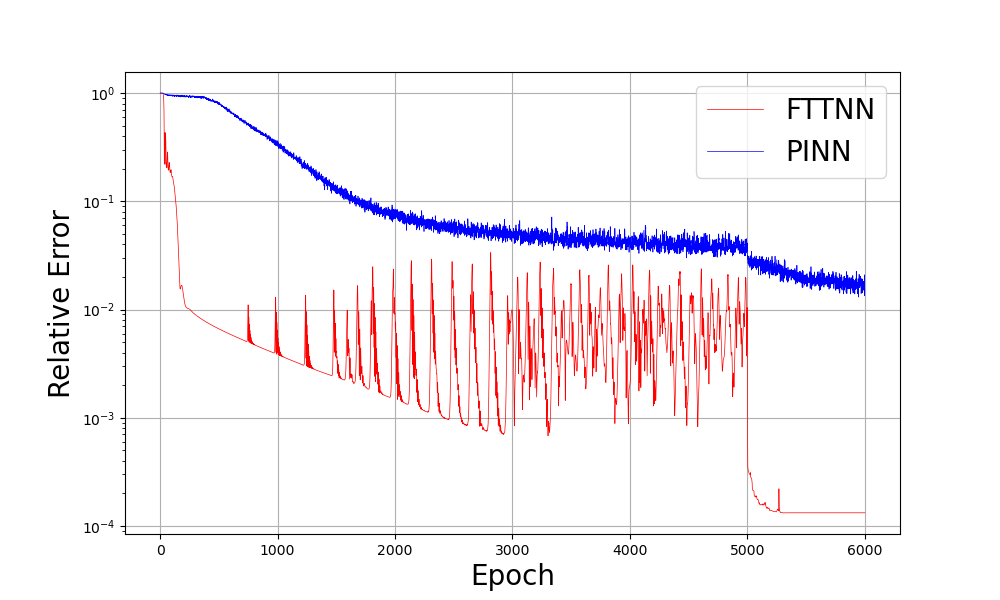}
     \caption{Relative error vs epoch ($d=5, r=3$), test problem \ref{h1}}
     \label{plot2}
\end{figure}
\begin{figure}[!htb]
	\centering
     \subfloat[][The exact solution]{\includegraphics[width=.3\textwidth]{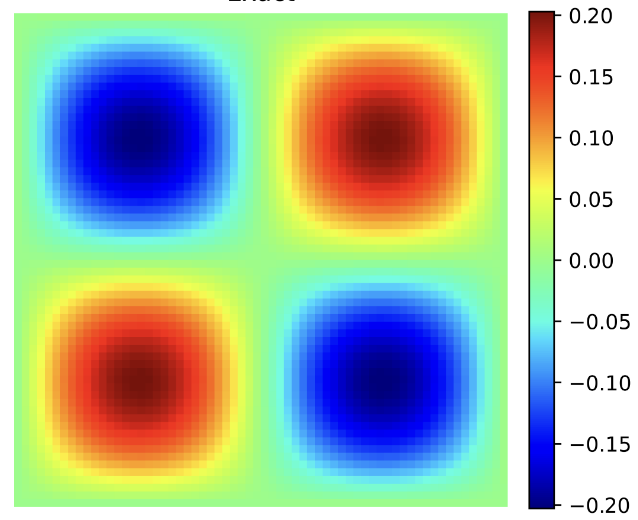}}
     \subfloat[][FTTNN solution $u(\mathbf{x};\theta)$]{\includegraphics[width=.3\textwidth]{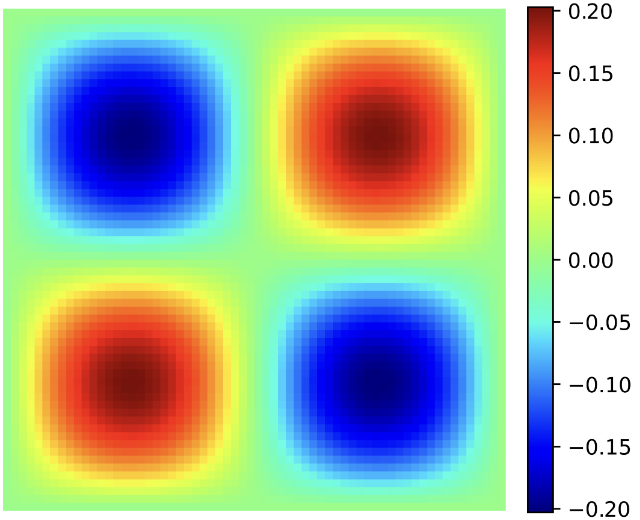}}
     \subfloat[][$u(\mathbf{x};\theta)-u(\mathbf{x})$]{\includegraphics[width=.3\textwidth]{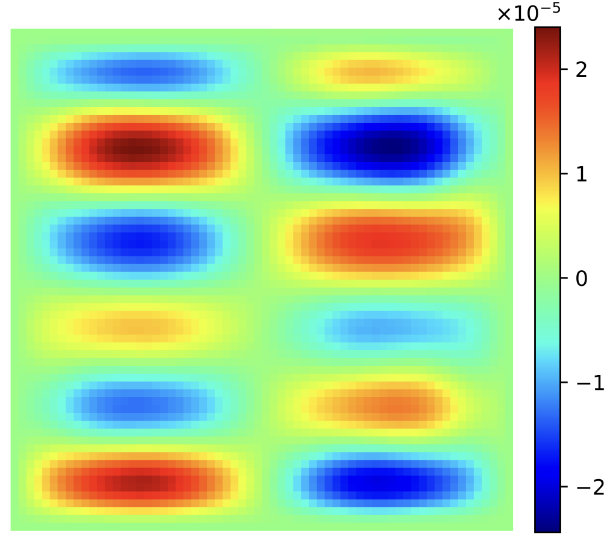}}\\
     \subfloat[][The exact solution]{\includegraphics[width=.3\textwidth]{exact_p41_5d.png}}
     \subfloat[][PINN solution $\hat{u}(\mathbf{x})$]{\includegraphics[width=.3\textwidth]{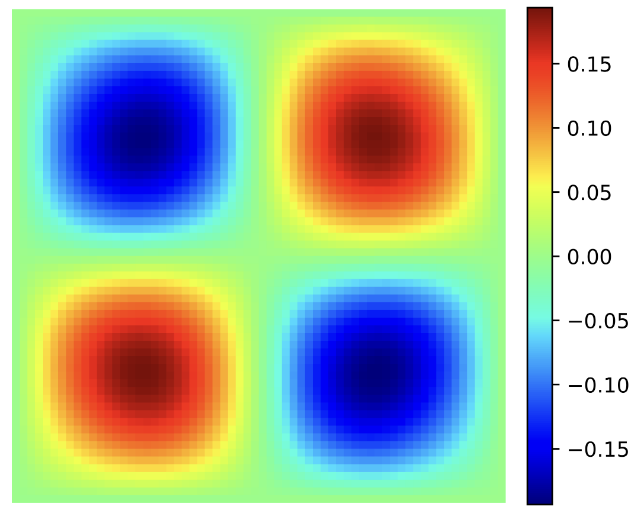}}
     \subfloat[][$\hat{u}(\mathbf{x})-u(\mathbf{x})$]{\includegraphics[width=.3\textwidth]{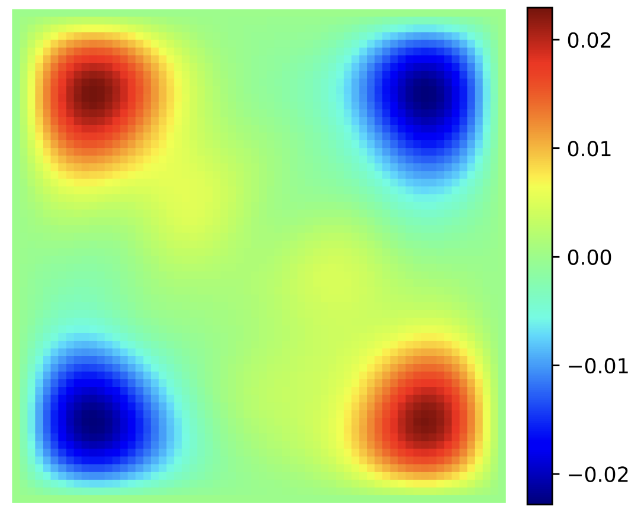}}
	\caption{Solutions of five-dimensional Helmoltz equations at $x_3=x_4=x_5=0.1$, test problem \ref{h1}.}
    \label{problem41_plot}
\end{figure}

\subsubsection{Three-dimensional Helmholtz equation with high wave number}\label{high}
In this test problem, three-dimensional Helmholtz equation with high wave number $k$ is considered,
\begin{equation*}
    -\Delta u(\mathbf{x})-k^2u(\mathbf{x})=2\sin(kx_1)\sin(kx_2)\sin(kx_3),\quad \mathbf{x} \in (0,1)^3,
\end{equation*}
with Dirichlet boundary conditions on all faces of a unit cubic. Its exact solution is
$
    u(\mathbf{x})=\sin(kx_1)\sin(kx_2)\sin(kx_3)/k^2
$.
Solving such equations is challenging due to the emergence of highly oscillatory solutions, which requires very fine meshes for accurate resolution. This causes high computational cost and potential numerical instability.

The neural network structure of FTTNN  and the settings of choosing quadrature points are the same as test problem \ref{h1}. 
In \Cref{alg}, the maximum epoch number for the Adam optimizer is set to $E_{a}=3000$ with a learning rate $\eta_{a}=0.003$, and for the LBFGS optimizer, it is set to $E_{l}=2000$ with a learning rate $\eta_{l}=0.1$.

For $k=3\pi$, it is easy for FTTNN to obtain a good approximation of the exact solution. From Figure \ref{problem42_plot}, it is clear that the estimated solution at $x_3=0.5$ computed by FTTNN (see Figure \ref{problem42_plot}(b)) looks very similar to the exact solution at $x_3=0.5$  (see Figure \ref{problem42_plot}(a)), and the difference between the two solutions is extremely small.

\begin{figure}[!htb]
	\centering
     \subfloat[][The exact solution $u(\mathbf{x})$]{\includegraphics[width=.3\textwidth]{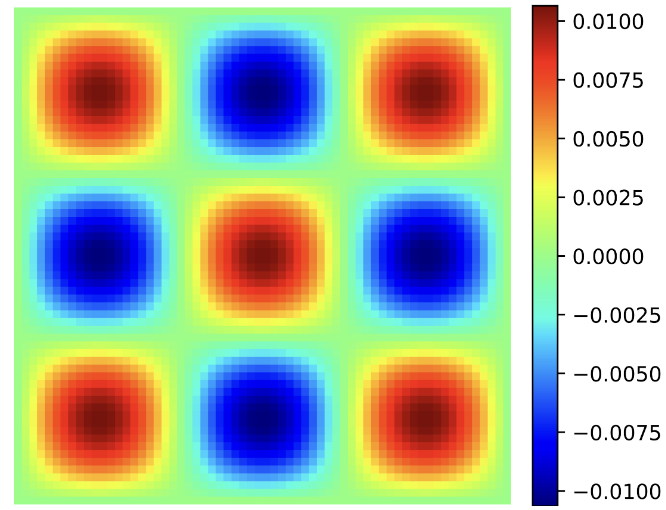}}
     \subfloat[][FTTNN solution $u(\mathbf{x};\theta)$]{\includegraphics[width=.3\textwidth]{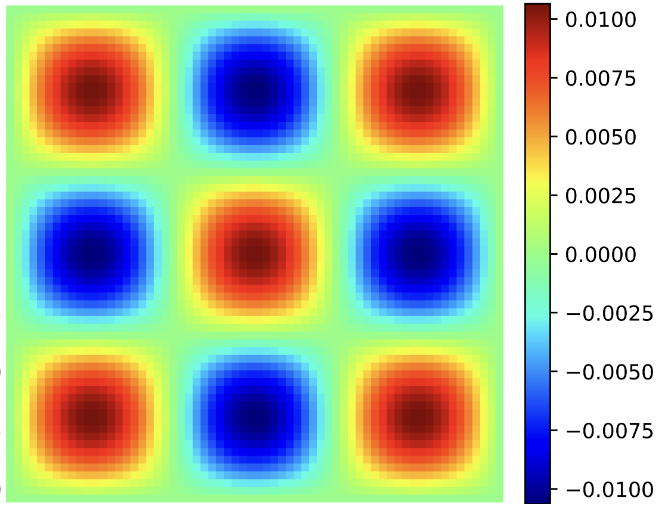}}
     \subfloat[][$u(\mathbf{x};\theta)-u(\mathbf{x})$]{\includegraphics[width=.285\textwidth]{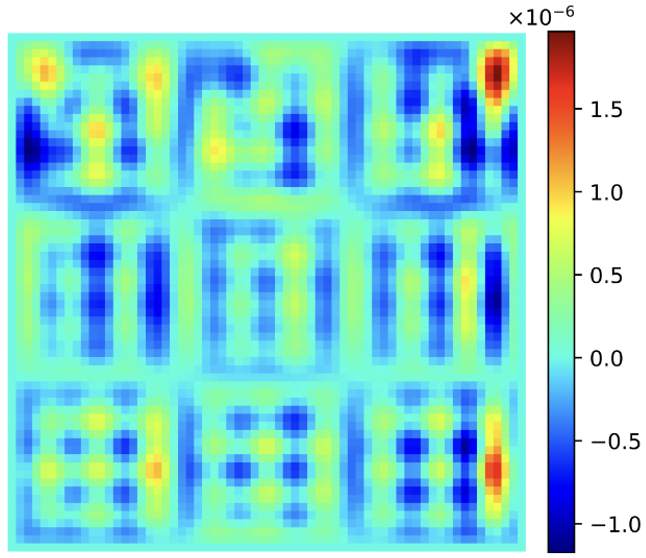}}
	\caption{Solutions of three-dimensional Helmholtz equation with $k=3\pi$ at $x_3=0.5$, test problem \ref{high}.}
    \label{problem42_plot}
\end{figure}
However, for higher wave number, we need a good initialization for each subnetwork of FTTNN. The trained subnetworks of FTTNN for $k=3\pi$ is applied to initialize the subnetworks of FTTNN for $k=5\pi$.
\Cref{h5} corresponds to the relative errors during the training process for $k=5\pi$, where it is seen that the relative errors decrease gradually with the number of epochs increasing and eventually converge to a stable value of $2.6\times 10^{-4}$. \Cref{problem42_plot_5pi}(a) is the exact solution at $x_3=0.5$  for this test problem. \Cref{problem42_plot_5pi}(b) means the estimated solution at $x_3=0.5$ by FTTNN. Their difference is shown in \Cref{problem42_plot_5pi}(c), where it is evident that the pointwise error is extremely small.
\begin{figure}[!htb]
    \centering
    \includegraphics[width=0.7\linewidth]{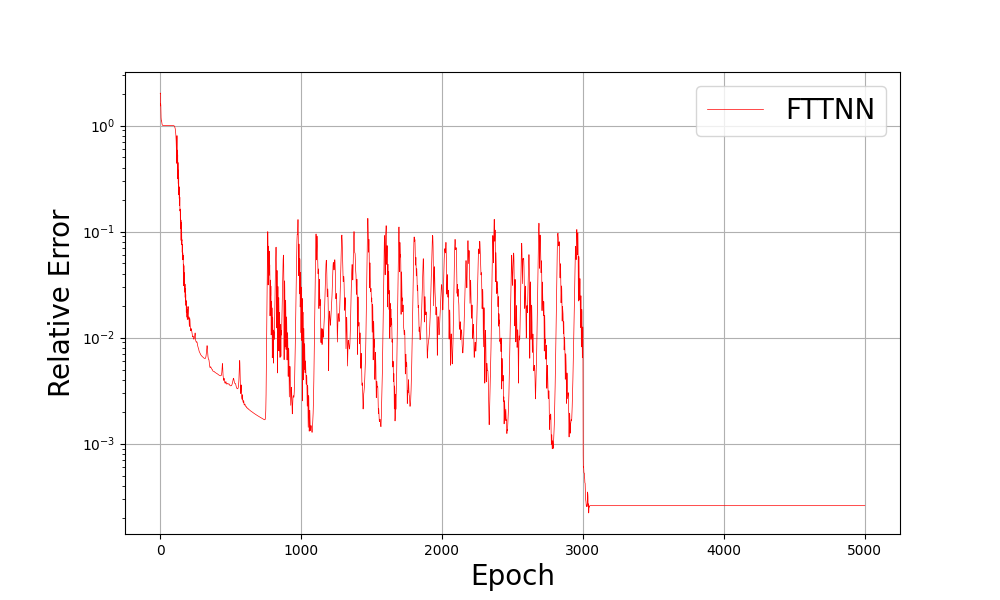}
    \caption{Relative error vs epoch ($k=5\pi, d=3, r=2$), test problem \ref{high}}
   \label{h5}
\end{figure}
\begin{figure}[!htb]
	\centering
     \subfloat[][The exact solution $u(\mathbf{x})$]{\includegraphics[width=.3\textwidth]{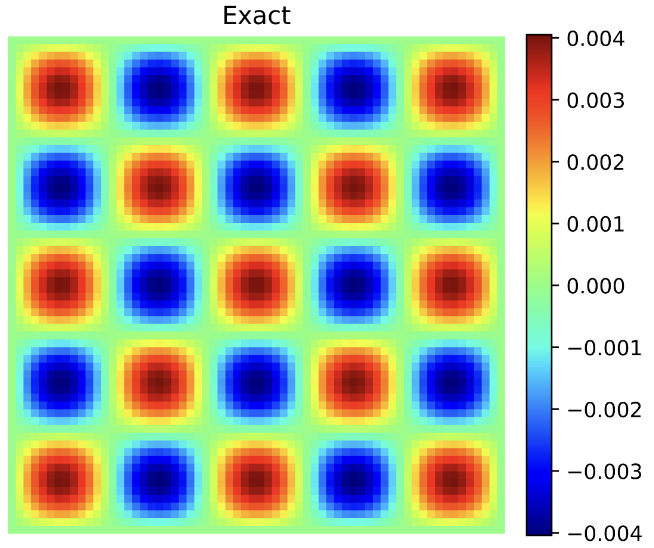}}
     \subfloat[][FTTNN solution $u(\mathbf{x};\theta)$ ]{\includegraphics[width=.3\textwidth]{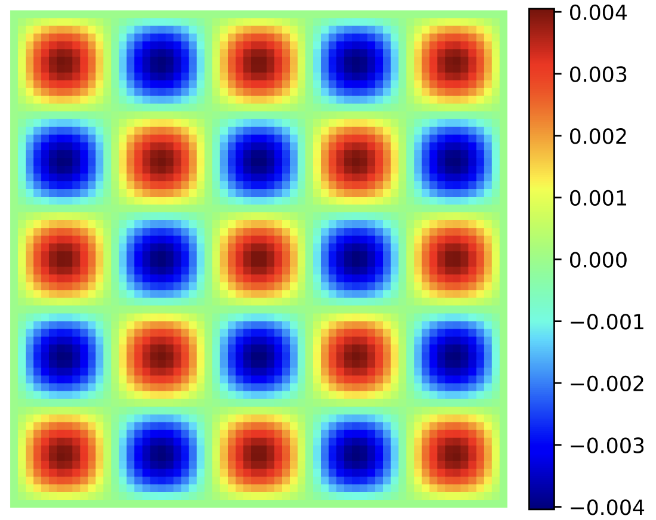}}
     \subfloat[][$u(\mathbf{x};\theta)-u(\mathbf{x})$ ]{\includegraphics[width=.285\textwidth]{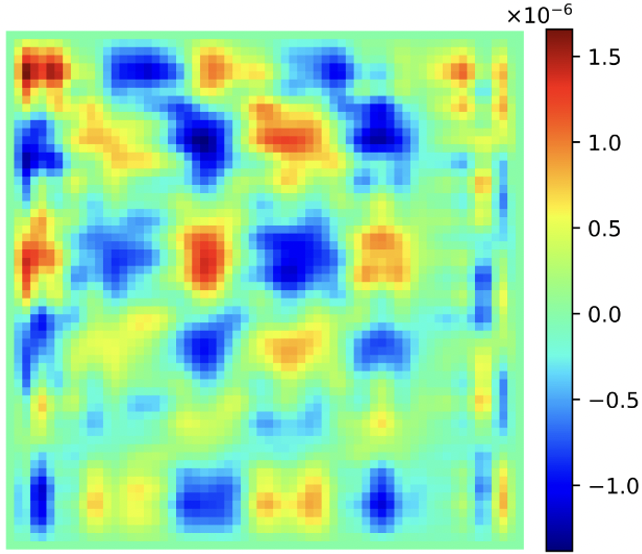}}
	\caption{Solutions of three-dimensional Helmholtz equation with $k=5\pi$ at $x_3=0.5$, test problem \ref{high}.}
    \label{problem42_plot_5pi}
\end{figure}

Furthermore, we apply the trained subnetworks of FTTNN for $k=5\pi$ to initialize the subnetworks of FTTNN for $k=10\pi$. We repeat this step to obtain the solutions of the Helmholtz equation with $k=15\pi, 20\pi, 25 \pi$.
\Cref{test44} provides the initialization of the subnetworks of FTTNN for different wave numbers and the associated relative errors. \Cref{problem42_plot_15pi}(b) provides the estimated solution at $x_3=0.5$ computed by FTTNN for $k=15\pi$, which aligns with the exact solution at $x_3=0.5$ shown in \Cref{problem42_plot_15pi}(a). \Cref{problem42_plot_15pi}(c) is the difference between the FTTNN solution and the exact solution, which implies that the pointwise error remains small.

\begin{table}[htbp]
% \footnotesize
 \caption{Relative error for different wave numbers, test problem \ref{high}.}\label{test44}
\begin{center}
  \begin{tabular}{|c|c|c|} \hline
   Wave number $k$ &  Initialization &Relative error\\ \hline
 $3\pi$ & Default Initialization & $7.0\times 10^{-5}$\\
			$5\pi$ & FTTNN with $3\pi$ & $2.6\times 10^{-4}$\\
            $10\pi$ & FTTNN with $5\pi$ &  $9.0\times 10^{-4}$\\
            $15\pi$ & FTTNN with $10\pi$ & $2.6\times 10^{-3}$\\
             $20\pi$ & FTTNN with $15\pi$ & $1.9\times 10^{-3}$\\
              $25\pi$ & FTTNN with $15\pi$ &  $2.0\times 10^{-3}$\\ \hline
  \end{tabular}
\end{center}
\end{table}

\begin{figure}[!htb]
	\centering
     \subfloat[][The exact solution $u(\mathbf{x})$]{\includegraphics[width=.3\textwidth]{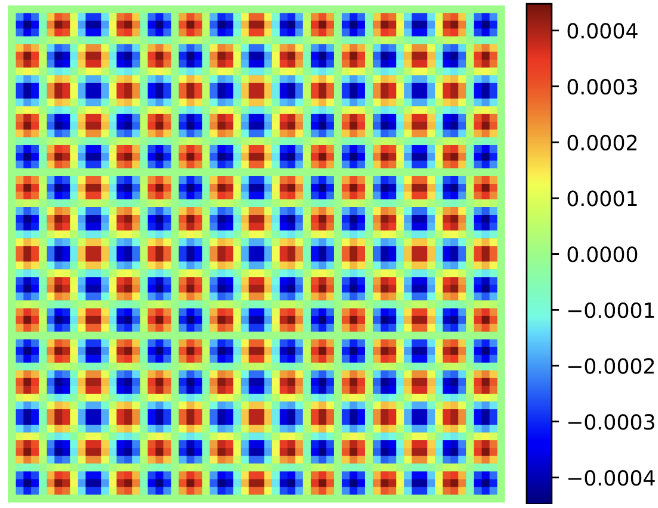}}
     \subfloat[][FTTNN solution $u(\mathbf{x};\theta)$ ]{\includegraphics[width=.3\textwidth]{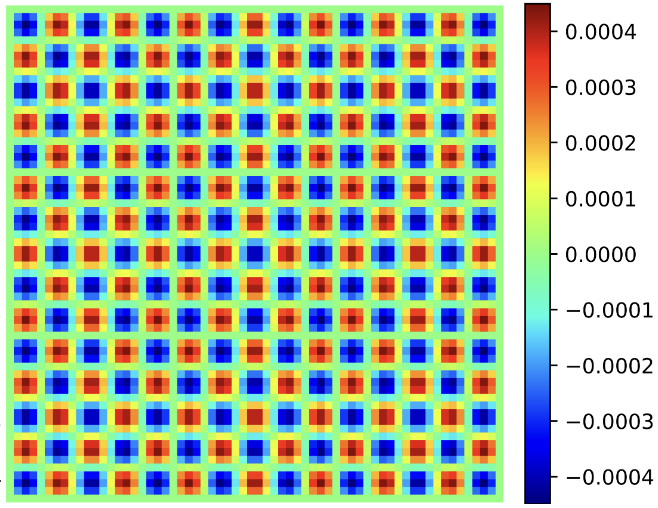}}
     \subfloat[][$u(\mathbf{x};\theta)-u(\mathbf{x})$ ]{\includegraphics[width=.285\textwidth]{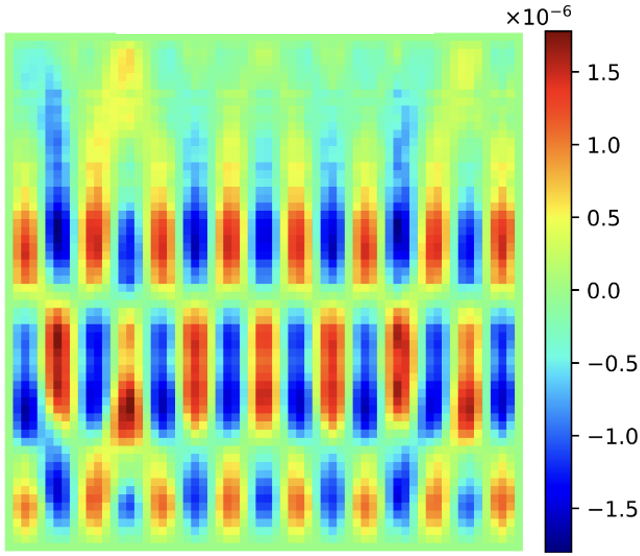}}
	\caption{Solutions of three-dimensional Helmholtz equation with $k=15\pi$ at $x_3=0.5$, test problem \ref{high}.}
    \label{problem42_plot_15pi}
\end{figure}

\subsection{Test problem 5: High-dimensional Schrödinger Eigenvalue Problems}
The Dirichlet eigenvalue problem for the Schrödinger operator is considered,
\begin{equation*}
    \mathcal{H}u(\mathbf{x}):=-\Delta u(\mathbf{x})+V(\mathbf{x})u(\mathbf{x})=\lambda u(\mathbf{x}),\quad \mathbf{x} \in \Omega=(-1,1)^d,\\
\end{equation*}
with homogeneous Dirichlet boundary conditions. Here, $\mathcal{H}$ is the Schrödinger operator and the potential function is defined as $
    V(x_1,\dots,x_d)=\frac{1}{d}\sum_{i=1}^d\cos(\pi x_i+\pi)
$.
Our goal is to compute the first eigenvalue $\lambda_1$,
\begin{equation}\label{eigen_loss}
\lambda_1=\min_{u(\mathbf{x})}\frac{\int_{\Omega} \nabla u(\mathbf{x}) \cdot \nabla u(\mathbf{x}) d\mathbf{x}+ \int_{\Omega} V(\mathbf{x})u^2(\mathbf{x}) d\mathbf{x}}{\int_{\Omega} u^2(\mathbf{x}) d\mathbf{x}}.
\end{equation}

Two cases ($d=5, 10$) are considered. 
FTTNN is compared with DRM, which has one hidden layer with 100 neurons.
 The hidden layer size and the ranks of FTTNN are specified in \Cref{setting4} to ensure that both models have the same total number of parameters.
For computing high-dimensional integrations of \Cref{eigen_loss} in FTTNN,
each $\Omega_i$ is decomposed into 20 equal 
subintervals and choose 20 quadrature points on each subinterval.
% Each subnetwork of FTTNN has the same neural network structure as test problem 1.
% FCN is applied to approximate $u(\mathbf{x})$ and then we 
For using sampling methods to compute \Cref{eigen_loss} in DRM, the number of sample points for $d=5,10$ is set to 20,000, 50,000 respectively.
The boundary conditions for both FTTNN and DRM are enforced in accordance with test problem 3.
In \Cref{alg}, we set the maximum epoch number for the Adam optimizer to $E_{a}=3000$ with a learning rate $\eta_{a}=0.0001$, and for the LBFGS optimizer, the maximum epoch number is $E_{l}=4000$ with a learning rate $\eta_{l}=0.0001$.
DRM is trained using the same procedure.

\begin{table}[htbp]
% \footnotesize
 \caption{The settings for FTTNN, test problem 5.}
    \label{setting4}
\begin{center}
  \begin{tabular}{|c|c|c|} \hline
     $d$ &   $r$ & hidden layer size   \\ \hline
5 & 3  &50\\
        10 & 5 &25\\ \hline
  \end{tabular}
\end{center}
\end{table}

\Cref{results4} provides the exact first eigenvalue and the estimated first eigenvalue by FTTNN and DRM in the case of $d=5,10$, where FTTNN produces more accurate first eigenvalue than DRM.
\begin{table}[htbp]
% \footnotesize
 \caption{The first eigenvalue, test problem 5.}
		\label{results4}
\begin{center}
  \begin{tabular}{|c|c|c|c|} \hline
     $d$ &   Exact &FTTNN &DRM \\ \hline
5 & 11.8345  &11.8345&11.8645\\
            10 &  24.1728&24.1728&23.2659\\ \hline
  \end{tabular}
\end{center}
\end{table}

\section{Conclusion}\label{section5}
In this paper, we propose FTTNN for solving high-dimensional PDEs.
We formulate the PDE solution as a functional tensor train representation, with TT-core functions parameterized by FCNs. Meanwhile, we develop fundamental concepts including functional tensor train rank, and provide the approximation property of FTTNN.
Numerical experiments demonstrate that, compared with discrete tensor train decomposition, FTTNN exhibits greater flexibility in function approximation and is more suitable for solving PDEs on irregular domains. Furthermore, the results indicate that, in comparison with PINN and DRM, FTTNN attains superior accuracy.
In the future work, FTTNN can be applied to more complex high-dimensional problems, such as Fokker-Planck equations.

\section*{Acknowledgments}
The simulations are performed using research computing facilities provided by Information Technology Services, The University of Hong Kong.

\bibliographystyle{siamplain}
\bibliography{references}
\end{document}